\newtheorem{thm}{Theorem}[section]
\newtheorem{THM}{Theorem}
\newtheorem{cor}[thm]{Corollary}
\newtheorem{prop}[thm]{Proposition}
\newtheorem{lemma}[thm]{Lemma}
\theoremstyle{definition}
\newtheorem{remark}[thm]{Remark}
\DeclareMathOperator{\Hom}{Hom}
\DeclareMathOperator{\rank}{rank}
\DeclareMathOperator{\ad}{ad}
\def\Q{\mathbb Q}
\def\R{\mathbb R}
\def\C{\mathbb C}
\def\P{\mathbb P}
\def\p{\mathfrak p}
\def\Z{\mathbb Z}
\def\F{\mathcal F}
\def\E{\mathcal E}
\def\D{\mathcal D}
\def\G{\mathcal G}
\def\H{\mathcal H}
\def\K{\mathcal K}
\def\L{\mathcal L}
\begin{document}

\title[Foliations with vanishing Chern classes]
{Foliations  with vanishing Chern classes}
\author[ J.V. Pereira and F. Touzet ]
{ Jorge Vit\'{o}rio PEREIRA$^{1}$ and Fr\'ed\'eric TOUZET$^2$}
\address{\newline $^1$ IMPA, Estrada Dona Castorina, 110, Horto, Rio de Janeiro,
Brasil \hfill\break $^2$ IRMAR, Campus de Beaulieu, 35042 Rennes Cedex, France
}\email{$^1$ jvp@impa.br}
\email{$^2$ frederic.touzet@univ-rennes1.fr}

\subjclass{} \keywords{Foliation, Chern class, transverse structure.}

\maketitle

\begin{abstract}
In this paper  we aim at the description of   foliations  having tangent sheaf $T\mathcal F$ with    $c_1(T\mathcal F)=c_2(T\mathcal F)=0$ on non-uniruled projective manifolds.
We prove that the universal covering of the ambient manifold splits as a product, and that the Zariski closure  of a general  leaf of $\mathcal F$ is an  Abelian variety.
It turns out that the analytic type of the Zariski closures of leaves
may vary from leaf to leaf.  We discuss how this variation is related to arithmetic properties of the tangent sheaf of the foliation.
\end{abstract}
\setcounter{tocdepth}{1}
\tableofcontents \sloppy

\section{Introduction and statement of results}\label{S:intro}

If $X$ is a compact K\"ahler manifold (e.g. $X$ is a projective manifold)  with real Chern classes satisfying $c_1(TX)=c_2(TX)=0$ then   Yau's solution to Calabi conjecture combined with a result by Apte implies that $X$ admits a flat Hermitian metric. One can thus evoke a classical result by Bierberbach to conclude that  there exists a finite \'etale morphism from a complex torus to  $X$, see \cite[Corollary 4.15]{kob2} and references  therein.

In this paper we aim at a  generalization of this result  where we replace the hypothesis on the tangent bundle of $X$ by the same hypothesis on saturated coherent subsheaves of the tangent bundle of $X$. We will also assume that $X$ is a non-uniruled, i.e. there is no rational curve passing through a general point of $X$, and projective. Even if many of our arguments do work  in the more general context of compact K\"ahler manifolds, at multiple places we will have to restrict to  projective manifolds.

Let $X$ be a complex manifold. A distribution $\D$ on $X$ is determined by a coherent subsheaf $T\D$ of $TX$ (the tangent sheaf of $\D$) which  has torsion free cokernel $TX/T\D$, in other words $T\D$ is saturated in $TX$. The generic rank of $T\D$ is the dimension of the distribution.  A foliation $\F$ on $X$ is a distribution with involutive (i.e. local sections are closed under Lie bracket)  tangent sheaf $T\F$.

\subsection{Previous results}
Before stating our results we recall some  partial answers to the problem of classifying distributions with vanishing Chern classes which will be useful in and/or motivate what follows .

\subsubsection{Foliation with trivial tangent sheaf}
If $\D$ is a distribution of positive dimension with trivial tangent bundle on a compact complex manifold $X$  then $h^0(X,TX)\ge h^0(X,T\D)= \dim \D>0$ and it follows that $X$ admits holomorphic vector fields.  Below we state a  theorem of Lieberman \cite{li},  see also \cite[Theorem 3.2]{AMN}, which completely describes the situation when the ambient is projective and non-uniruled.

\begin{thm}\label{T:lieb}
If $X$ is non-uniruled projective manifold with $h^0(X,TX)>0$ then up to a finite \'{e}tale covering,   $X=A\times Y$  where $A$ is an Abelian variety and $Y$ satisfies $h^0( Y,TY)=0$.
\end{thm}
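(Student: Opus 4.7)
The plan is to analyze $G := \Aut^0(X)$, the identity component of the automorphism group of $X$, whose Lie algebra is $H^0(X,TX)$ and is therefore positive dimensional by hypothesis. The main input is the Fujiki--Lieberman structure theorem for automorphism groups of compact K\"ahler manifolds: $G$ fits in a short exact sequence
\[
1 \longrightarrow L \longrightarrow G \longrightarrow T \longrightarrow 1,
\]
with $L$ a connected linear algebraic group and $T$ a compact complex torus. Non-uniruledness forces $L$ to be trivial, since any positive dimensional connected linear algebraic group contains $\mathbb G_m$ or $\mathbb G_a$, and the orbits of such subgroups would sweep out rational curves through a general point of $X$. Since $X$ is projective, $G=T$ is in fact an abelian variety, which I denote by $A$.

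Quotient $A$ by the (necessarily finite) kernel of the action to assume $A$ acts faithfully on $X$. The infinitesimal action defines an $\mathcal O_X$-linear map $\operatorname{Lie}(A)\otimes_{\C}\mathcal O_X \to TX$; I claim it is injective at every point. Otherwise a nonzero subspace $V \subset \operatorname{Lie}(A)$ would vanish at some $x\in X$; exponentiating, one obtains a positive dimensional subtorus $B\subset A$ fixing $x$, and compactness of $B$ together with the commutativity of $A$ would propagate this to force $B$ to act trivially on $X$, contradicting faithfulness. Thus $A$ acts with constant finite stabilizer $S \subset A$. Replacing $X$ by the finite \'etale cover obtained by pulling back along the isogeny $A\to A/S$, one may assume $A$ acts freely on $X$.

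Setting $Y := X/A$, the quotient is a smooth projective manifold and $\pi : X \to Y$ is a holomorphic principal $A$-bundle. The remaining step, which I expect to be the main technical hurdle, is to show that $\pi$ becomes trivial after a further finite \'etale cover. For this one uses that the Albanese map $\alpha:X\to\operatorname{Alb}(X)$ is $A$-equivariant and that the induced homomorphism $A\to\operatorname{Alb}(X)$ is an isogeny onto an abelian subvariety $B\subset\operatorname{Alb}(X)$; Poincar\'e's complete reducibility supplies a complementary abelian variety $C$ and an isogeny $B\times C\to\operatorname{Alb}(X)$. The composition $X\to\operatorname{Alb}(X)\to B$, after pullback along this isogeny, produces (up to a finite \'etale cover killing the torsion class of $\pi$ in $H^1(Y,\underline{A})$) a section of $\pi$, and hence the desired decomposition $X\cong A\times Y$. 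Finally, any nonzero holomorphic vector field on $Y$ would lift to $X = A\times Y$ as a vector field outside $\operatorname{Lie}(A)$, contradicting the maximality $\Aut^0(X)=A$; hence $h^0(Y,TY)=0$, completing the proof.
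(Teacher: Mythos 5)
The paper itself offers no proof of this statement---it is quoted from Lieberman (see also Amoros--Manjarin--Nicolau)---so your argument can only be judged on its own terms. Its first half is sound: the Fujiki--Lieberman sequence, the vanishing of the linear part $L$ by non-uniruledness, and the finiteness of all stabilizers (holomorphic homomorphisms from a compact torus to $GL(T_xX)$ are trivial, and linearization at a fixed point then forces a compact subgroup fixing $x$ to act trivially) are all correct. The gap is the assertion that $A=\Aut^0(X)$ acts with \emph{constant} finite stabilizer. Commutativity only makes the stabilizer constant along each orbit; it can jump on the fixed loci of torsion elements of $A$, so $X/A$ need not be smooth and $X\to X/A$ need not be a principal bundle, and the entire second half of the argument loses its foundation. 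A bielliptic surface $X=(E\times F)/G$ is a concrete counterexample to these intermediate claims: $\Aut^0(X)\cong E$ acts with generically trivial stabilizer but with nontrivial stabilizers over the points of $F$ fixed by elements of $G$, the quotient $X/E\cong F/G\cong \P^1$, and $X\to \P^1$ is an elliptic fibration with multiple fibers, not a principal bundle. Your argument would conclude that $X$ is, up to finite \'etale cover, $E\times\P^1$, which is uniruled; the correct decomposition is $A=E\times F$ and $Y$ a point. The same example breaks your last step: after a finite \'etale cover $\Aut^0$ may grow, so ``maximality of $\Aut^0(X)=A$'' yields no contradiction with $h^0(Y,TY)>0$ (here $Y=F$ is elliptic).

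The repair uses exactly the Albanese ingredients you list, but \emph{instead of} the quotient $X/A$ rather than on top of it. Since $L=1$, the homomorphism $A\to\operatorname{Alb}(X)$ is an isogeny onto an abelian subvariety $B$; Poincar\'e reducibility gives a complement $C$, and the composition $X\to\operatorname{Alb}(X)\to\operatorname{Alb}(X)/C=:B'$ is an $A$-equivariant map onto a torus on which $A$ acts transitively, hence automatically a smooth fibration. Pulling $X$ back along the isogeny $A\to B'$ produces a finite \'etale cover carrying an $A$-equivariant map to $A$ acting on itself simply transitively, and therefore a splitting $A\times Y$ with $Y$ a fiber. One then iterates on $Y$ (again projective, non-uniruled, of strictly smaller dimension) until $h^0(TY)=0$; it is this induction, not the maximality of the original $\Aut^0(X)$, that kills the remaining vector fields.
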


It follows that the distribution $\D$ is smooth,  involutive and the underlying foliation  is the pull-back under the  natural projection to $T$ of a linear  foliation.

\subsubsection{Foliations with trivial canonical class}\label{croco3}
In \cite{croco3}, a joint work with F. Loray, we have shown   that distributions on non-uniruled projective manifolds satisfying $c_1(T\D)=0$ are smooth and involutive.  Therefore,  if we restrict ourselves to the category of non-uniruled projective manifolds then our problem about distributions reduces to a problem about smooth foliations.
Under the same hypothesis, we have also proved in \cite{croco3}  the existence of a transverse smooth foliation which together with $\D$ provide a splitting of the tangent bundle of $X$;  and that the determinant of $T\D$ is a torsion line-bundle. These results, with precise statements, are recalled in \S \ref{S:polystable}.

As corollary of the statement concerning $\det T\D$ one obtains that a foliation $\F$ of dimension one on a non-uniruled manifolds with $c_1(T\F)=0$ is defined by a global holomorphic vector on a suitable finite \'etale covering. Hence, we can apply Lieberman's result quoted above
to conclude that $\F$ is tangent to an isotrivial  fibration by abelian varieties.

\subsubsection{Codimension one  foliations}
Smooth foliations of codimension one with $c_1(T\F)=0$ on compact K\"ahler manifolds have been classified in \cite{Touzet}. In particular, when $c_2(T\F)=0$ we have the following possibilities:
\begin{enumerate}
\item up to a finite \'etale covering $X$ is a complex torus and $\F$ is a linear foliation on it; or
\item up to a finite \'etale covering $X$ is the product of a complex torus and a curve and $\F$ is the pull-back of a linear foliation on the torus by the natural projection; or
\item $X$ is  a $\P^1$-bundle over a complex torus, and $\F$ is everywhere transverse to the fibers of this $\P^1$-bundle.
\end{enumerate}
In all cases $\mbox{det}\ (T\F)$ is a torsion line-bundle, and when $X$ is not uniruled then after a finite \'etale covering $X$ splits as the product of a complex torus and a smooth manifold of dimension $0$ or $1$.

We will proceed to describe the new results proved in this paper. The remaining    of this introduction   reflects the structure of the paper with each subsection describing the content of the  corresponding section of the paper.

\subsection{Splitting of the universal covering}
Let $X$ be a projective non-uniruled projective manifold and $\F$ be a foliation on $X$ with $c_1(T\F)=0$.
Using results from (\cite{croco3}) we prove  that
$T\F$ is polystable whenever $c_1(T\mathcal F)=0$.
Therefore $T\mathcal F$ is indeed an Hermite-Einstein bundle  by  a theorem of Donaldson. Specializing to foliations which satisfy the additional assumption $c_2(T\mathcal F)=0$, we obtain that $T\mathcal F$ is a flat hermitian bundle  and as such  carries a flat connection with unitary monodromy
\[
   \rho:\pi_1(X)\rightarrow U(r,\mathbb C)\subset GL(r,\mathbb C)
\]
It turns out that this representation is also the monodromy representation of a transversely Euclidean foliation everywhere transverse to $\F$. Exploiting the transverse geometry of this foliation, similarly to what we have done in a previous joint work with
M. Brunella \cite{BPT},  we prove the following result.

\begin{THM}\label{TI:A}
Let $\F$ be a  foliation with $c_1(T\mathcal F)=c_2(T\mathcal F)=0$ in $H^*(X,\mathbb R)$ on a projective manifold manifold  $X$. If  $X$ is not uniruled then $\F$ is a smooth foliation, there exists a smooth foliation $\F^\perp$ which together with $\F$ induces a splitting $TX=T\F \oplus T\F^\perp$  of the tangent bundle of $X$,  and the universal covering of $X$ splits as a product  $\C^{\dim(\F)} \times Y$.
\end{THM}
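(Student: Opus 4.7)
Following the roadmap in the introduction, the plan is to combine \cite{croco3} with Donaldson's theorem and then apply the developing-map technique of \cite{BPT}. From \cite{croco3} (recalled in \S\ref{croco3}), on a non-uniruled projective manifold the hypothesis $c_1(T\F)=0$ alone already implies that $\F$ is smooth, that there is a complementary smooth foliation $\F^\perp$ giving a splitting $TX=T\F\oplus T\F^\perp$, and that $T\F$ is polystable with $\det T\F$ torsion. This settles items (1) and (2); it remains to produce the product decomposition of the universal cover.

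By Donaldson's theorem the polystable bundle $T\F$ carries a Hermite-Einstein metric. Since $c_1(T\F)=0=c_2(T\F)$, the Bogomolov-L\"ubke inequality
\[
  \bigl(2r\,c_2(T\F)-(r-1)c_1(T\F)^2\bigr)\cdot H^{n-2}\ge 0,\qquad r=\dim\F,
\]
is saturated, and the pointwise Kobayashi-L\"ubke estimate then forces the Hermite-Einstein curvature to vanish identically. Hence $T\F$ is flat Hermitian, with unitary monodromy $\rho:\pi_1(X)\to U(r,\C)$. Through the splitting, this flat connection on $T\F\cong N\F^\perp$ has to be checked to coincide with a transverse connection on $\F^\perp$, i.e.\ to extend the Bott connection; this step endows $\F^\perp$ with the structure of a transversely Euclidean foliation of codimension $r$ whose holonomy representation is precisely $\rho$.

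To finish, I would invoke the developing-map machinery of \cite{BPT}. The transverse Euclidean structure lifts on the universal cover $\tilde X$ to a $\rho$-equivariant holomorphic developing map $\mathrm{dev}:\tilde X\to\C^r$ whose level sets contain the leaves of the pullback $\tilde\F^\perp$. Exploiting the cocompactness of the $\pi_1(X)$-action on $\tilde X$ together with the compactness of the structure group $U(r,\C)$, one promotes $\mathrm{dev}$ to a surjective, locally trivial holomorphic fibration; since the base $\C^r$ is simply connected and contractible, this fibration is trivial, yielding $\tilde X\cong\C^r\times Y$ for any fiber $Y$, with the leaves of $\tilde\F$ appearing as the $\C^r$-factors. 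The principal obstacle is exactly this last step: ruling out non-closed leaves of $\tilde\F^\perp$ and an exotic image for $\mathrm{dev}$ requires a genuine extension of the codimension-one transverse-geometry arguments of \cite{BPT} to arbitrary codimension, leveraging cocompactness and the rigidity of the unitary transverse structure.
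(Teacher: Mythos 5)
Your overall strategy is the paper's: smoothness, the splitting $TX=T\F\oplus T\F^\perp$ and polystability come from \cite{croco3}, Donaldson's theorem gives the flat unitary structure on $T\F$, and the product decomposition is to be extracted from the transverse Euclidean geometry of $\F^\perp$. But the two steps you flag as ``to be checked'' and as ``the principal obstacle'' are precisely the mathematical content of the theorem, and you leave both open. For the first: the paper does not match the flat connection against the Bott connection; it observes that $\F^\perp$ is defined by a holomorphic $1$-form $\omega\in H^0(X,\Omega^1_X\otimes T\F)$ (the projection $TX\to T\F$ with kernel $T\F^\perp$), and that the K\"ahler identities force $\nabla\omega=0$ for the flat unitary connection $\nabla$ on $T\F$ --- a Bochner-type argument on the compact K\"ahler manifold $X$. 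This single computation is what makes $\F^\perp$ transversely Euclidean (indeed with a parallel, hence complete, transverse metric) with holonomy $\rho$; it is not automatic from the isomorphism $T\F\cong N\F^\perp$.

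For the second gap: no ``genuine extension of \cite{BPT} to arbitrary codimension'' is required, and your worries about non-closed leaves of $\pi^*\F^\perp$ and an exotic image of the developing map are resolved by short, elementary arguments once $\nabla\omega=0$ is in hand. On the universal cover $\tilde X$ the closed form $\pi^*\omega$ has a primitive $F=(F_1,\dots,F_r)\colon\tilde X\to\C^r$, which is exactly your developing map. Its restriction to any leaf $\L$ of $\pi^*\F$ is a local isometry for the complete metric $\sum|dF_i|^2$, hence a covering map onto $\C^r$, hence a biholomorphism since $\C^r$ is simply connected; in particular $F$ is surjective and every leaf of $\pi^*\F$ is a copy of $\C^r$. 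The leaves of $\pi^*\F^\perp$ are components of level sets of the holomorphic map $F$, so they are automatically closed; and each fiber $F^{-1}(x)$ is connected because the saturations by $\pi^*\F$ of its distinct components would be pairwise disjoint open sets covering the connected space $\tilde X$ (each leaf of $\pi^*\F$ meets $F^{-1}(x)$ in exactly one point). Fixing one leaf of each foliation, the map $a\mapsto(a_\L,a_{\L^\perp})$ is then a bijection $\tilde X\to\L\times\L^\perp\cong\C^r\times Y$, and a biholomorphism by the flow-box theorem. As written, your proposal stops exactly where this argument has to begin.
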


The projectivity of $X$ is used only to prove the polystability of $T\F$, and the proof  is based on the pseudo-effectiveness of $KX$. If we assume that $X$ is a compact K\"ahler manifold with pseudo-effective $KX$ then the same conclusion should probably holds true. Also, if we drop the non-uniruledness assumption but replace it with the existence of decomposition of the tangent bundle then the proof of Theorem \ref{TI:A} can be adapted to
prove the following result.

\begin{THM}\label{TI:Abis}
Let $X$ be a compact K\"ahler manifold such that the tangent bundle splits
as a direct sum of two subbundles $A\oplus B$. If $A$ is an involutive subbundle of $TX$ and admits a  flat hermitian metric then the universal covering of $X$ splits as a product $\C^{\rank(A)}\times Y$ compatible with the splitting of $TX$.
\end{THM}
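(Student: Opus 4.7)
The strategy is to adapt the proof of Theorem~\ref{TI:A}, with the hypotheses of Theorem~\ref{TI:Abis} playing the role of the polystability and Hermite-Einstein input used there: the flat Hermitian metric on $A$ directly supplies the unitary local system that Theorem~\ref{TI:A} produces via Donaldson's theorem, and the given splitting of $TX$ replaces the one produced by the polystability argument of \cite{croco3}. Let $\mathcal F$ be the foliation with $T\mathcal F=A$, let $\nabla$ be the flat Chern connection of the given metric $h_A$ on $A$, and let $\rho:\pi_1(X)\to U(r)\subset GL(r,\mathbb C)$, $r=\rank(A)$, be its monodromy. On the universal covering $\pi:\tilde X\to X$, $\pi^*A$ is trivial as a flat Hermitian bundle, and parallel sections furnish $r$ holomorphic vector fields $\sigma_1,\dots,\sigma_r$ on $\tilde X$ tangent to the lifted foliation $\tilde{\mathcal F}$.

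Following the strategy of \cite{BPT} used in Theorem~\ref{TI:A}, the idea is to construct a transversely Euclidean foliation $\mathcal F^\perp$ everywhere transverse to $\mathcal F$ with monodromy $\rho$ and with $T\mathcal F^\perp=B$. It is convenient to organise this through the $A$-valued $1$-form $\omega\in\Omega^1(X,A)$ given by the projection $p_A:TX\to A$ of the splitting; since both $A$ and $B$ are holomorphic subbundles, $\omega$ is a holomorphic $(1,0)$-form with values in $A$. A direct computation shows that $d_\nabla\omega\in\Omega^2(X,A)$ decomposes, according to $\Lambda^2 T^*X=\Lambda^2 A^*\oplus(A^*\otimes B^*)\oplus\Lambda^2 B^*$, into three pieces measuring, respectively, the leafwise torsion of $\nabla$ with respect to $h_A$, the discrepancy between $\nabla|_B$ and the Bott connection of the distribution $B$ on its normal bundle $A$, and $-p_A$ applied to the Lie bracket of sections of $B$. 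Hence $d_\nabla\omega=0$ simultaneously encodes the integrability of $B$, the leafwise K\"ahlerness of $h_A$, and the Bott-Chern compatibility along $B$.

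Granting $d_\nabla\omega=0$, integrating $\omega$ on the simply-connected $\tilde X$ produces a $\rho$-equivariant holomorphic developing map $\mathrm{dev}:\tilde X\to\mathbb C^r$ whose vertical distribution is $\pi^*B$. Compactness of $X$ combined with the resulting transverse Euclidean structure then forces $\mathrm{dev}$ to be a locally trivial holomorphic fibration: any fibre plays the role of $Y$, while the orbits of the complete commuting vector fields $\sigma_i$ (which are precisely the pull-backs of the parallel sections of $A$ and whose completeness follows from the flatness of their norm) furnish the $\mathbb C^r$-factor, with the splitting $T\tilde X=\pi^*A\oplus\pi^*B$ of product type by construction.

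The principal obstacle is establishing $d_\nabla\omega=0$, i.e.\ the three compatibilities above. I would attack it via a Bochner-type argument in the spirit of \cite{BPT}: picking an auxiliary K\"ahler metric on $X$, comparing it with the given flat metric on $A$, and exploiting the fact, absent in the proof of Theorem~\ref{TI:A} where it was replaced by $c_2(T\mathcal F)=0$, that the connection $\nabla$ is defined and flat in \emph{every} direction of $X$, not merely along $\mathcal F$. This transverse rigidity of the flat metric is precisely what is expected to force simultaneously the integrability of $B$, the leafwise K\"ahlerness of $h_A$, and the Bott-Chern compatibility along $B$.
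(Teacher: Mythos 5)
Your proposal reproduces the skeleton of the paper's own argument: the same $A$-valued holomorphic $1$-form $\omega=p_A\in H^0(X,\Omega^1_X\otimes A)$, the same pivotal identity $d_\nabla\omega=0$, the same integration of $\pi^*\omega$ to a developing map on the universal cover, and the same product structure extracted from the two transverse foliations. The problem is that you stop exactly at the pivotal step: you declare "the principal obstacle is establishing $d_\nabla\omega=0$" and only sketch a hoped-for Bochner argument "comparing the auxiliary K\"ahler metric with the flat metric on $A$." As written, the proof is therefore incomplete, and the proposed route to close it is misdirected. The paper closes this step in one line, and the mechanism is not a metric comparison but plain Hodge theory for unitary local systems: since $(A,\nabla)$ is flat unitary and $X$ is compact K\"ahler, the K\"ahler identities hold for $d_\nabla=\partial_\nabla+\bar\partial$; the form $\omega$ is $\bar\partial$-closed (holomorphicity of the splitting) and $\bar\partial^*$-closed for trivial bidegree reasons (it is of type $(1,0)$), hence $\bar\partial$-harmonic, hence $d_\nabla$-harmonic, hence $d_\nabla\omega=0$. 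No positivity, curvature comparison, or Bochner integration by parts is needed beyond this. Once you have this, your decomposition of $d_\nabla\omega$ into the three components (leafwise torsion of $\nabla$, compatibility with the Bott connection, and $-p_A$ of brackets of sections of $B$) is correct and does deliver, in particular, the integrability of $B$ for free — which is a point worth making explicit, since $B$ is not assumed involutive in the statement.

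Two smaller remarks on the endgame. First, the commutativity and completeness of the parallel vector fields $\sigma_i$ on $\tilde X$ that you invoke are themselves consequences of $d_\nabla\omega=0$ (vanishing of the leafwise torsion gives $[\sigma_i,\sigma_j]=\nabla_{\sigma_i}\sigma_j-\nabla_{\sigma_j}\sigma_i=0$), so they cannot be used before that identity is established. Second, "compactness forces $\mathrm{dev}$ to be a locally trivial fibration" is asserted rather than proved; the paper's route is to observe that $\mathrm{dev}$ restricted to each leaf of $\pi^*\F$ is a local isometry from a complete flat metric onto $\C^{\rank(A)}$, hence a covering, hence a biholomorphism, and then to check by a connectedness argument that the fibers of $\mathrm{dev}$ are exactly the leaves of $\pi^*\F^\perp$, after which the product map $\tilde X\to\C^{\rank(A)}\times Y$ is a bijection and a local biholomorphism by the flow-box theorem. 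These details should be supplied, but they are routine once $d_\nabla\omega=0$ is in hand.
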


This provides further evidence to Beauville's conjecture concerning the universal covering of compact K\"ahler manifolds with split tangent bundle, see \cite{Beauville, BPT, Horing} and references therein.

\subsection{Shafarevich map and structure theorem}
In view of Lieberman's result, one might expect that in the presence of a foliation with vanishing Chern classes  there is no need to pass to universal covering to obtain a splitting of the ambient manifold: a finite covering would suffice. It turns out that the situation is more delicate
and the existence of such splitting is determined the holonomy representation
$\rho : \pi_1(X) \to U(r, \C)$ of the  hermitian flat bundle $T\F$.

If the representation $\rho$ has finite image then, after a finite \'etale covering, we obtain a foliation trivial tangent sheaf and we are reduced to Lieberman's Theorem. Otherwise, if the image of $\rho$ is infinite then a result of Zuo on the Shafarevich map of representations \cite{Z}  allows us to prove the following structure theorem.

\begin{THM}\label{TI:B}
Let $\F$ be a  foliation on a projective manifold $X$. If   $c_1(T\mathcal F)=c_2(T\mathcal F)=0$ in $H^*(X,\mathbb R)$
then, after passing to a finite \'{e}tale covering,   there exists a meromorphic fibration  on $X$  whose general fiber $F$ is an abelian variety,  the foliation $\mathcal F$ is
tangent to this fibration, and the restriction of $\F$ to   $F$ is  a linear foliation.
\end{THM}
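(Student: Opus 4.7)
The plan is to split into cases according to whether the unitary monodromy $\rho\colon \pi_1(X)\to U(r,\C)$ of the flat Hermitian bundle $T\F$ (from Theorem \ref{TI:A}, with $r=\dim\F$) has finite or infinite image. Theorem \ref{TI:A} already provides: $\F$ smooth, the splitting $TX=T\F\oplus T\F^\perp$, a universal cover $\tilde X=\C^r\times Y$, and the fact that $\F^\perp$ is transversely Euclidean with linear monodromy $\rho$.

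If $\rho(\pi_1(X))$ is finite, pass to the étale cover $X_1\to X$ corresponding to $\ker\rho$. There $T\F$ has trivial monodromy, hence is holomorphically trivial, so $h^0(X_1,TX_1)\geq r>0$. Theorem \ref{T:lieb} then yields, after a further finite étale cover, a product $X_2=A\times Z$ with $A$ abelian and $h^0(Z,TZ)=0$; by the remark following Theorem \ref{T:lieb}, $\F$ is the pullback of a linear foliation $\G$ on $A$. Taking $B\subseteq A$ to be the subtorus equal to the Zariski closure of a general $\G$-leaf, the morphism $X_2\to (A/B)\times Z$ is the desired fibration: fibers are copies of $B$, $\F$ is tangent, and $\F|_B=\G|_B$ is linear.

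If $\rho(\pi_1(X))$ is infinite, apply Zuo's theorem on the Shafarevich map of representations \cite{Z} to $\rho$, producing a meromorphic fibration $\sigma\colon X\dashrightarrow S$ whose general fiber $F$ is maximal among connected closed subvarieties with finite $\rho(\pi_1(F))$. After a further finite étale cover chosen to trivialize $\rho|_{\pi_1(F)}$ on a general fiber, the bundle $T\F|_F$ becomes holomorphically trivial of rank $r$.

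The crux, which I expect to be the main obstacle, is to identify $F$ as an abelian variety tangent to $\F$ with linear restriction. My approach is to use the universal cover structure: the $\pi_1(X)$-action on the $\C^r$-factor of $\tilde X$ is through $\rho$ together with the translations coming from the transverse Euclidean structure of $\F^\perp$, so $\sigma$ should, at the level of universal covers, coincide with the algebraic reduction of the second projection $\tilde X\to Y$; fibers of $\sigma$ would then lift to unions of $\F$-leaves $\C^r\times\{y\}$, giving the tangency $T\F|_F\subseteq TF$. Once tangency is established, the trivialization $T\F|_F\cong\mathcal{O}_F^{\,r}$ produces $r$ commuting holomorphic vector fields on $F$, and applying Theorem \ref{T:lieb} to $F$ yields $F\cong A'\times Z'$ with $A'$ abelian. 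Finally, the Shafarevich maximality of $F$ combined with the fact that a generic $\F$-leaf should be Zariski dense in $F$ forces $Z'$ to reduce to a point, so $F=A'$ is an abelian variety and $\F|_F$ is the linear foliation defined by the commuting vector fields. The geometric identification of $\sigma$ with the algebraic reduction of $\tilde X\to Y$ is the delicate step on which everything else hangs.
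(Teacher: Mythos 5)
Your case split and the treatment of the finite-monodromy case are fine and agree with the paper, but the infinite-monodromy case has two genuine gaps. First, the tangency of $\F$ to the fibers of the Shafarevich map is exactly the step you defer, and your proposed route (identifying $\sigma$ at the level of universal covers with ``the algebraic reduction of $\tilde X\to Y$'') is not an argument and does not obviously close. The paper proves tangency differently: by Zuo's theorem the Shafarevich variety is of general type, hence measure hyperbolic, whereas if $\F$ were not tangent to the fibers the Shafarevich map would restrict to a local biholomorphism on a set of the form $\C^p\times Z_Y$ inside $\tilde X=\C^r\times Y$, which is not measure hyperbolic --- a contradiction. Note that you never use the general-type conclusion of Zuo's theorem, which is precisely the input that makes tangency provable. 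A related technical point: Zuo's theorem applies to Zariski dense representations into an \emph{almost simple} group, and the Zariski closure of $\rho(\pi_1(X))\subset U(r,\C)$ is only reductive; one must project to each almost simple factor of the semisimple part, apply Zuo factor by factor, and then use that a unitary representation with virtually solvable image is finite (Corollary \ref{C:finitemonodromy}, resting on the torsion of $\det T\F$) to recover the statement for $\mathrm{sh}_\rho$ itself.

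Second, your final step is wrong as stated: the Shafarevich fiber $F$ need not be an abelian variety, and the premise that ``a generic $\F$-leaf should be Zariski dense in $F$'' is false in general. The Zariski closure of a leaf is only the abelian factor $A'$ in Lieberman's decomposition $F\cong A'\times Z'$ (already in your own finite-monodromy case the leaf closure is a proper subtorus $B\subsetneq A$, while the Shafarevich fiber is all of $X$; crossing the Faltings example with a simply connected $Y$ gives the same phenomenon with infinite monodromy). So maximality of $F$ does not force $Z'$ to be a point. The fibration asserted in the theorem is a \emph{refinement} of the Shafarevich map: the paper takes the relative tangent sheaf of $\mathrm{sh}_\rho$, pushes forward and saturates to obtain a foliation $\mathcal A$ whose general leaves are the abelian varieties $A'$ inside the Shafarevich fibers, and then produces the meromorphic fibration from the induced map to the Hilbert scheme. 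Your proof needs this extra construction; without it the conclusion ``the general fiber is an abelian variety'' does not follow.
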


By a meromorphic fibration we mean a rational map $f: X \dashrightarrow  B$ whose restriction to an open subset $X^0 \subset X$ is a proper morphism over an open subset  $B^0$ of the base.

\subsection{Infinite monodromy}
Perhaps it is worth noticing at this point that the image of $\rho$ can be indeed  infinite. This later property implies the non isotriviality of the abelian fibration and  a construction carried out by Faltings  in \cite{Faltings} provides examples. In \S \ref{S:Faltings} we exhibit foliations of dimension two and codimension three with flat tangent bundle with infinite monodromy. The Abelian fibration given by Theorem \ref{TI:B} is smooth with fibers of dimension four, and therefore the linear  foliations have codimension two in the fibers.

\subsection{Codimension two} Studying the variation of Hodge structures determined by the Abelian fibration, we are able to prove that the above mentioned examples are optimal: they have  minimal dimension and minimal  codimension among the examples with infinite representation $\rho$.

\begin{THM}\label{TI:C}
Let $\F$ be a  foliation of codimension two  on a non-uniruled  projective manifold $X$. If   $c_1(T\mathcal F)=c_2(T\mathcal F)=0$ in $H^*(X,\mathbb R)$
then, after passing to a finite \'{e}tale covering,     $X = A  \times Y$ where $A$ is an Abelian variety and $Y$ is a point, a smooth curve, or  a surface. The foliation $\F$ is the pull-back of a linear foliation on $A$ under the projection to the first factor.
\end{THM}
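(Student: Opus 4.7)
The plan is to combine Theorem \ref{TI:B} with a Hodge-theoretic analysis of the resulting Abelian fibration to force the monodromy $\rho\colon\pi_1(X) \to U(\rank T\F, \C)$ of the flat hermitian bundle $T\F$ (cf.\ Theorem \ref{TI:A}) to have finite image; Lieberman's Theorem \ref{T:lieb} then delivers the splitting. By Theorem \ref{TI:B}, after a finite \'etale cover, there is a meromorphic fibration $f\colon X \dashrightarrow B$ with general fibre $F$ an Abelian variety, $\F$ tangent to $f$, and $\F|_F$ a linear foliation; the codimension-two hypothesis forces $\dim B \le 2$, with the degenerate case $\dim B = 0$ giving $X$ itself Abelian. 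Assuming $\rho$ has finite image, a further finite \'etale cover trivializes $T\F$ and yields $\rank T\F$ commuting holomorphic vector fields spanning it. Lieberman's theorem then produces $X = A \times Y$ with $A$ Abelian and $h^0(Y,TY)=0$; these vector fields necessarily come from $H^0(A, TA)$, are translation-invariant on $A$, and span a fixed subspace $V \subset \mathrm{Lie}(A)$, so $\F$ is the pull-back under the first projection $X = A \times Y \to A$ of the linear foliation associated to $V$. The inequality $\rank T\F \le \dim A$ combined with $\rank T\F = \dim X - 2$ forces $\dim Y \le 2$, as required.

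For the core step, when $\dim B \in \{1,2\}$, the translation-invariant quotient $T_F / T\F|_F$ on each fibre has rank $2-\dim B$ and integrates to a surjection $F \twoheadrightarrow Q_F$ of Abelian varieties with $\dim Q_F = 2-\dim B$ (a point if $\dim B = 2$, an elliptic curve if $\dim B = 1$). Over the smooth locus $B^0$ of $f$, these quotients assemble into a morphism $X^0 \to \mathcal Q \to B^0$ of smooth Abelian fibrations, and $\F|_{X^0}$ is the vertical foliation of the kernel-fibration $X^0 \to \mathcal Q$. Applying Poincar\'e reducibility fibrewise (and absorbing the ambient isogeny into a further finite \'etale cover), the variation of Hodge structure $\mathcal H = R^1 f_\ast \C$ splits over $B^0$ as a direct sum $\mathcal H_1 \oplus \mathcal H_2$ of polarized weight-one sub-VHS, with $\mathcal H_1$ arising from the kernel sub-fibration. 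Under the identification $f_\ast T_{X/B} \cong \mathcal H^{1,0}$ coming from the polarization, $T\F$ descends over $B^0$ to the $(1,0)$-piece $\mathcal H_1^{1,0}$, and the flatness and unitarity of $T\F$ translate into $\mathcal H_1$ being a polarized weight-one VHS with unitary local system. Griffiths's curvature formula for Hodge bundles, together with Torelli for polarized Abelian varieties, forces such a $\mathcal H_1$ to be isotrivial, with monodromy contained in the finite automorphism group of a polarized Abelian variety. Hence $T\F$ (and thus $\rho$) has finite monodromy, closing the loop.

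The main obstacle is the identification of $T\F$ with the $(1,0)$-piece of a sub-VHS of $\mathcal H$: this requires careful Hodge-theoretic bookkeeping, in particular the compatibility of the flat hermitian metric on $T\F$ with the Hodge metric on $\mathcal H^{1,0}$. One must additionally manage the local monodromies around $B \smallsetminus B^0$ (since $f$ is only meromorphic, the VHS lives on an open subset) to transfer finiteness from $\rho|_{\pi_1(B^0)}$ to all of $\pi_1(X)$.
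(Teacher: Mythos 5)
Your overall strategy (reduce to finiteness of the unitary representation $\rho$ via the Hodge theory of the Abelian fibration, then conclude with Lieberman's theorem) matches the paper's, and the reductions $\dim B\le 2$ and the easy cases $\dim B\in\{0,2\}$ are fine. But the core step fails. You assert that the quotient $T_F/T\F|_F$ on each fibre ``integrates to a surjection $F\twoheadrightarrow Q_F$ of Abelian varieties'', so that $\F$ becomes the vertical foliation of a kernel sub-fibration by Abelian subvarieties, and you then split the VHS by Poincar\'e reducibility. This is false: $T\F|_F$ is merely a linear subspace of $\mathrm{Lie}(F)$ and need not be the Lie algebra of any Abelian subvariety; the leaves of a linear foliation on an Abelian variety are in general dense, and the quotient of $\mathrm{Lie}(F)$ by such a subspace does not descend to a quotient Abelian variety. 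The decisive counterexample is in the paper itself: in the examples of \S\ref{S:Faltings} the fibres are \emph{simple} Abelian fourfolds, which admit no proper Abelian subvarieties whatsoever, while $\F|_F$ is a rank-two linear foliation with flat unitary tangent bundle and \emph{infinite} monodromy. Since your argument uses the codimension-two hypothesis only to get $\dim B\le 2$ (it would run unchanged for $\dim B=1$ and $\codim\F\ge 3$), it would ``prove'' finiteness of $\rho$ in those examples --- so something must break, and it is exactly this integration step. Relatedly, your final step is too quick even granting a unitary sub-VHS: unitarity forces finite monodromy only when the sub-local-system is defined over $\Q$, so that Kronecker's theorem applies to the induced integral representation; a unitary local system carrying no rational structure can have infinite (merely relatively compact) monodromy --- again Faltings' $\mathbb W\subset (R^1f_*\C)^{1,0}$.

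The paper circumvents precisely this difficulty. It takes $N^{1,0}$ to be the \emph{maximal} $\pi_1(V)$-submodule of $H^1(F,\C)$ contained in $H^{1,0}(F)$ --- a unitary module that is a priori defined only over $\C$ --- and studies the orbit of $N^{1,0}$ under $\mathrm{Gal}(\C/\Q)$ acting on submodules, which is legitimate because the monodromy preserves $H^1(F,\Q)$. The codimension-two hypothesis enters by forcing the $q_F$-orthogonal complement of $N=N^{1,0}\oplus\overline{N^{1,0}}$ (denoted $B=B^{1,0}\oplus\overline{B^{1,0}}$ in the paper) to be two-dimensional; then each of the three possible behaviours of a Galois conjugate of an irreducible factor $N_{i_0}$ with $\dim N_{i_0}\ge 2$ (all conjugates stay inside $N$; a conjugate projects nontrivially to both $N$ and $B$; a conjugate equals $B$) leads to a contradiction, using Kronecker's theorem in the first two cases and the isotropy of $H^{1,0}$ for the polarization $q_F$ in the third. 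If you want to salvage your write-up, this Galois-orbit analysis is the missing ingredient that must replace the (nonexistent) Poincar\'e splitting.
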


\subsection{Arithmetic}
Another possible approach to prove our structure Theorem is using reduction to positive characteristic. As the leaves of our foliation are uniformized by Euclidean spaces they are Liouvillian in the sense of pluripotential theory and one might hope to be able to use Bost's Theorem \cite{Bost}. Although we are able to prove the existence of a non-trivial foliation $\G$, $p$-closed for almost every prime $p$, containing our foliation with vanishing Chern classes, Faltings example shows that $\G$ does not have necessarily vanishing Chern classes and we are unable to control the universal coverings of its leaves.
Nevertheless, we can prove the following statement.

\begin{THM}\label{TI:D}
Let $\F$ be a  foliation on complex projective manifold $X$ both defined over a finitely generated $\Z$-algebra $R$. Suppose that $\F$ is maximal, with respect to inclusion, among the foliations with   $c_1(T\F)=0$ and  $c_2(T\F)=0$. Then at least one of the following assertions holds true.
\begin{enumerate}
\item Up to a finite \' etale covering,  $X$ is isomorphic to a product of an Abelian variety $A$ with another projective manifold $Y$, and $\F$ is  the pull-back of  a linear foliation on $A$ under the natural projection $A\times Y \to A$.
\item For a dense set of maximal primes $\p$ in $Spec(R)$  the reduction modulo $\p$ of $T \F$ is not  Frobenius semi-stable. Moreover,  there exists a non-empty open subset $U\subset Spec(R)$ such that for every maximal prime $\p \in U$ the reduction modulo $\p$ of $\F$ is either $p$-closed or the reduction modulo $\p$ of $T\F$ is not Frobenius semi-stable.
\end{enumerate}
 \end{THM}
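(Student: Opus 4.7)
The proof proceeds by dichotomy on the image of the unitary monodromy $\rho:\pi_1(X)\to U(r,\C)$ of the hermitian flat bundle $T\F$ produced by Theorem~\ref{TI:A}. After possibly shrinking $\mathrm{Spec}(R)$, spread out $X$, $\F$, the splitting $TX=T\F\oplus T\F^\perp$, and the finite \'etale cover furnished by Theorem~\ref{TI:B} so that all of this structure is available in every reduction modulo a maximal prime $\p$.

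Suppose first that $\rho$ has finite image. Replacing $X$ by the finite \'etale cover corresponding to $\ker\rho$, the flat hermitian bundle $T\F$ becomes holomorphically trivial, so $h^0(X,TX)\ge\rank T\F>0$. Lieberman's Theorem~\ref{T:lieb} then provides, after a further \'etale covering, a decomposition $X=A\times Y$ with $A$ an abelian variety and $h^0(Y,TY)=0$. The global sections trivializing $T\F$ must be tangent to the $A$-factor, and hence $\F$ is the pull-back of a linear foliation on $A$. This is conclusion (1).

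Assume now that $\rho$ has infinite image. The main technical input is a theorem of Langer in positive characteristic, according to which a Frobenius semi-stable torsion-free sheaf $E$ on a smooth polarized projective variety satisfying $c_1(E)=0$ and $\Delta(E)\cdot H^{n-2}=0$ becomes, after a finite \'etale cover, an iterated extension of trivial line bundles, and in particular has finite monodromy. To establish assertion (2a), suppose that the set of maximal primes at which $T\F_\p$ fails to be Frobenius semi-stable is not dense in $\mathrm{Spec}(R)$. Then there would exist a non-empty open $V\subset\mathrm{Spec}(R)$ over which $T\F_\p$ is Frobenius semi-stable with vanishing Chern classes, so Langer's theorem gives finite characteristic-$p$ monodromy at every closed point of $V$. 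A specialization argument for \'etale fundamental groups (SGA~1, XIII) transports this finiteness back to the characteristic-zero monodromy $\rho$, contradicting its infinite image.

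For assertion (2b), let $\G\supseteq\F$ be the characteristic-zero foliation (already constructed elsewhere in the paper) for which $\G_\p$ is $p$-closed for almost every $\p$, and let $U\subset\mathrm{Spec}(R)$ be an open set on which this holds and on which the formation of $T\G$ and $T\F$ commutes with reduction. If $\G=\F$, then $\F_\p$ is $p$-closed for every $\p\in U$ and the claim is automatic. Otherwise $\G\supsetneq\F$, and the maximality of $\F$ forces $\G$ to have nonzero Chern classes. I then rule out the possibility that both $T\F_\p$ is Frobenius semi-stable and $\F_\p$ fails to be $p$-closed for $\p$ in a dense subset of $U$: applying Langer's theorem again, the $p$-closure $\F^{[p]}_\p$ would produce, for such $\p$, a strictly larger involutive subsheaf of rank that is uniform along a dense family of primes, whose spreading-out to characteristic zero would yield a foliation strictly containing $\F$ and still satisfying $c_1=c_2=0$, against maximality. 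The main obstacle is exactly this last step --- controlling the limit of the mod-$p$ $p$-closures in characteristic zero --- where the abelian fibration structure from Theorem~\ref{TI:B} and the flat hermitian structure from Theorem~\ref{TI:A} must be leveraged to exclude pathological degenerations.
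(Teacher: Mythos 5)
Your first case ($\rho$ finite) is fine and is essentially the route the paper sketches in its introduction: trivialize $T\F$ on the cover corresponding to $\ker\rho$ and apply Lieberman's Theorem \ref{T:lieb}. (The paper's own proof of Theorem \ref{TI:D} reaches conclusion (1) instead from the hypothesis that $\F_\p$ is $p$-closed for almost all $\p$, via Bost's algebraicity theorem and the Liouvillian property of the leaves; your shortcut is legitimate for the sub-case you isolate.) The problems are all in the case $\rho$ infinite, and they are genuine. First, the version of ``Langer's theorem'' you invoke is not a correct statement: Frobenius (strong) semi-stability together with $c_1=0$ and $\Delta\cdot H^{n-2}=0$ makes the reduction a numerically flat bundle, i.e.\ a representation of the $S$-fundamental group scheme; it does \emph{not} make it an iterated extension of trivial line bundles after a finite \'etale cover, nor does it give finite monodromy --- a non-torsion element of $\Pic^0$ of an abelian variety over $\overline{\mathbb F}_p$ is already a counterexample. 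Even granting some finiteness in characteristic $p$, the passage back to the characteristic-zero \emph{unitary} monodromy $\rho$ is not a specialization statement about \'etale fundamental groups: $T\F_\p$ does not inherit the flat structure of $T\F$, and no comparison between $\rho$ and any char-$p$ ``monodromy'' of $T\F_\p$ is available. So your argument for (2a) does not go through.

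Second, for (2b) you explicitly leave open the decisive step --- showing that the characteristic-zero foliation $\G$ obtained by lifting the mod-$p$ $p$-envelopes contradicts the maximality of $\F$. That step is precisely where the paper's work lies: if $Frob^*T\F_\p$ is semi-stable for a dense set of primes, the image of the $p$-power map has non-negative degree, hence $\deg(T\G)\ge 0$ for every polarization, which together with the pseudo-effectivity of $K\G$ (Campana--Peternell) gives $c_1(T\G)=0$; and then $c_2(T\G)=0$ is obtained by showing that $N^*\G^{\perp}$ is flat for the Gauss--Manin connection along the abelian fibration of Theorem \ref{TI:B}, hence hermitian flat, so that $\G$ violates the maximality of $\F$. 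Without this, maximality is never actually used and assertion (2) is not established. In short: your case 1 is correct, but case 2 rests on a false key lemma in (2a) and an acknowledged missing argument in (2b).
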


In particular, the foliations  presented in Section \ref{S:Faltings} have tangent sheaf which are  stable but not strongly semi-stable for infinitely many primes. To the best of our knowledge the only previously known examples of this phenomena appeared in  \cite{Brenner}.

\subsection*{Acknowledgements} We are grateful to Jo\~ao Pedro dos Santos for bringing to our knowledge the references \cite{Brenner} and \cite{Langer}.

\section{Polystability  and splitting}

\subsection{Polystability of the tangent sheaf}\label{S:polystable}
Let $\mathcal E$ be a coherent sheaf  on a  $n$-dimensional smooth projective  variety $X$ polarized
by an ample line bundle $H$. The slope of $\mathcal E$ (more precisely the $H$-slope of $\mathcal E$)  is defined as the quotient
\[
\mu ( \mathcal E) = \frac{ c_1( \mathcal E) \cdot H^{n-1} }{ \rank ( \mathcal E) } \, .
\]
If the slope of every coherent  proper subsheaf $\mathcal E'$ of $\mathcal E$ satisfies $\mu ( \mathcal E') < \mu ( \mathcal E )$ (respectively $\mu ( \mathcal E') \le \mu ( \mathcal E )$) then $\mathcal E$ is called stable (respectively semi-stable).

A vector bundle $\mathcal E$ is said to be polystable if it can be expressed as a finite sum
$$\mathcal E=\bigoplus_i {\mathcal E}_i$$
where each summand ${\mathcal E}_i$ is a stable subbundle.

The goal of this paragraph is to prove the following lemma.

\begin{lemma}\label{L:polystable}
If $\D$ is a distribution with $c_1(T\D)=0$ on a non-uniruled projective manifold $X$ then $\D$ is smooth, integrable, and has polystable tangent sheaf.
To wit, there exists a finite family ${\mathcal F}_1,...,{ \mathcal F}_p$  of smooth holomorphic subfoliations of $\D$  whose tangent sheafs are stable with respect to any given polarisation,  have zero first Chern class,  and satisfy
$$T\mathcal D=\bigoplus_{i=1}^p{T\mathcal F}_i.$$
\end{lemma}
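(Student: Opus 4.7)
The plan is to leverage the results from \cite{croco3} recalled in the introduction, which already guarantee that any distribution with $c_1(T\D)=0$ on a non-uniruled projective manifold is automatically smooth and involutive. So from the outset I may assume $\D$ is a smooth foliation $\F$ with $\det T\F$ torsion, and that a compatible ambient splitting $TX=T\F\oplus T\F^{\perp}$ is at our disposal. The only new content to establish is the polystability of $T\F$ together with the vanishing of the first Chern class of each stable summand.

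To prove $H$-semistability of $T\F$ for an arbitrary polarization $H$, I would examine the maximal destabilizing subsheaf $\E_1\subset T\F$ from the Harder--Narasimhan filtration. A classical Miyaoka-style argument shows that $\E_1$ is involutive: the $\mathcal{O}_X$-linear bracket map $\wedge^2\E_1\to T\F/\E_1$ must vanish, since in characteristic zero $\wedge^2\E_1$ is semistable of slope $2\mu(\E_1)$ while by maximality $\mu_{\max}(T\F/\E_1)<\mu(\E_1)$. Hence $\E_1$ defines a subfoliation of $\F$, and Campana--Paun's theorem on the pseudo-effectiveness of the canonical class of a foliation on a non-uniruled projective manifold yields $\mu(\E_1)\le 0$. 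Combined with the tautology $\mu(\E_1)\ge\mu(T\F)=0$, this forces $\E_1=T\F$, proving semistability.

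For polystability I would argue by induction on $\rank T\F$. If $T\F$ is already stable, there is nothing to do. Otherwise, choose a stable saturated proper subsheaf $\E\subsetneq T\F$ of slope zero from the Jordan--H\"older filtration and form its saturated involutive hull $\widetilde{\E}\subseteq T\F$; applying the Miyaoka/Campana--Paun argument \emph{inside} $\widetilde{\E}$ produces a saturated involutive subsheaf $\G\subseteq\widetilde{\E}$ of slope zero. The vanishing $c_1(T\G)\cdot H^{n-1}=0$ together with the pseudo-effectiveness of $-c_1(T\G)$ then upgrades slope vanishing to $c_1(T\G)=0$ in $H^2(X,\R)$, since a pseudo-effective $(1,1)$-class of zero $H^{n-1}$-degree must vanish. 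Then \cite{croco3} applied to $\G$ yields a splitting $TX=T\G\oplus T\G^{\perp}$, and intersecting with $T\F$ gives $T\F=T\G\oplus(T\F\cap T\G^{\perp})$, a direct sum of two involutive subsheaves of strictly smaller rank and of vanishing first Chern class. The induction hypothesis then concludes.

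The delicate point in this plan is guaranteeing that $\G$ is \emph{proper} in $T\F$ so that the induction actually decreases rank; this amounts to arranging $\widetilde{\E}\subsetneq T\F$ for some choice of $\E$, i.e.\ ruling out the pathological alternative in which every stable slope-zero subsheaf generates all of $T\F$ as a Lie subsheaf. I expect this to be handled either by choosing $\E$ within the socle of $T\F$ and using the ambient splitting $TX=T\F\oplus T\F^{\perp}$ to produce additional Lie-invariant subsheaves, or by a direct argument showing that strict semistability of $T\F$ always forces the existence of a nontrivial proper involutive subsheaf with $c_1=0$.
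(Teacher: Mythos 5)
Your overall skeleton matches the paper's: quote \cite{croco3} for smoothness, involutivity and the ambient splitting, use non-uniruledness to bound slopes of subsheaves, and induct on the rank by splitting off a summand via the complementary foliation. But there is a genuine gap at the heart of your induction, and you have correctly identified it yourself: you cannot guarantee that the involutive subsheaf $\G$ obtained as (or inside) the ``involutive hull'' of a stable Jordan--H\"older factor $\E$ is a \emph{proper} subsheaf of $T\F$. Nothing in your argument rules out the possibility that every slope-zero stable subsheaf generates all of $T\F$ as a Lie subsheaf, and neither of the two escape routes you sketch at the end is substantiated. As written, the induction may fail to decrease the rank, so polystability is not proved.

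The missing idea, which is exactly how the paper closes this gap, is that involutivity of the destabilizing subsheaf is \emph{automatic} and therefore no involutive hull is needed. Take $\E\subsetneq T\F$ a proper saturated subsheaf of slope zero (for some polarization) and saturate it in $TX$; the saturation stays inside $T\F$ (because $T\F$ is saturated) and keeps rank and degree, so it is a \emph{distribution} $\D_0$ with, by Campana--Peternell pseudo-effectivity of $K\D_0$ plus the vanishing of the $H^{n-1}$-degree, $c_1(T\D_0)=0$ in $H^2(X,\R)$. Then $\D_0$ satisfies the hypotheses of the lemma itself, so Demailly's integrability theorem (via Theorem \ref{T:croco}) makes it a smooth foliation with a complement $T X=T\D_0\oplus T\D_0^{\perp}$, and intersecting with $T\F$ gives the proper direct summand $T\F=T\D_0\oplus(T\F\cap T\D_0^{\perp})$ on which to induct. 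Note also that the Campana--Peternell statement used in the paper applies to arbitrary saturated subsheaves of $TX$, not only to foliations, so your Miyaoka-style bracket argument for the involutivity of the Harder--Narasimhan destabilizer is a correct but unnecessary detour in the semistability step: semistability of $T\F$ follows directly from $\mu_H(\E)\le 0=\mu_H(T\F)$ for every saturated $\E\subset TX$.
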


Most of the arguments that will be used in the proof of this lemma already appeared in \cite{croco3}. We now proceed to  briefly recall them.

On the one hand, if $X$ is  not uniruled then Boucksom-Demailly-Paun-Peternell characterization of uniruledness \cite{BDPP} implies that the canonical bundle of $X$ is pseudo-effective. On the other hand, if   $c_1(T\mathcal D)=0$ then
the $T\D$ can  defined as the kernel of a holomorphic $q$-form with coefficients in the
bundle $KX\otimes \det(T\D)$. Demailly's Theorem implies that $\D$ is integrable, i.e. $\D$ is not only a distribution but is also a  foliation.

The smoothness of $\D$ is proved in \cite{croco3}. As the relevant result will be essential in the proof of Lemma \ref{L:polystable} we  reproduce here its statement.

\begin{thm}\label{T:croco}
Let  $X$ be a projective manifold with $KX$ pseudo-effective and $L$ be a pseudo-effective line bundle on $X$.
If $v \in H^0(X,\bigwedge^p TX\otimes L^*)$ is a non-zero section then the zero set of $v$  is empty.
Moreover, if  $\mathcal D$ is a codimension $q$ distribution on  $X$ with $c_1(T\mathcal D)=0$  then $\mathcal D$
is a  smooth foliation (i.e. $T \mathcal D$ is involutive)
and there exists another  smooth holomorphic foliation
$\mathcal G$ of dimension $q$ on $X$ such that $ T X = T \mathcal D \oplus T \mathcal G$.
\end{thm}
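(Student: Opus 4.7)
The result has two halves --- the non-vanishing of the section $v$ and the smooth-foliation/splitting statement for $\D$ --- and my plan is to derive the second from the first.

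For the non-vanishing, I would proceed in three steps. First, absorb the divisorial part of $Z(v)$ into the twist: if $D$ is the maximal effective divisor with $v = s_D \otimes v'$, then $v' \in H^0(X, \bigwedge^p TX \otimes (L \otimes \mathcal{O}_X(D))^*)$ has zero locus of codimension at least two, and $L \otimes \mathcal{O}_X(D)$ remains pseudo-effective; so we may assume $Z(v)$ has codimension $\ge 2$ from the outset. Then $v$ gives a saturated rank-one inclusion $L \hookrightarrow \bigwedge^p TX$, which dualises to a surjection $\bigwedge^p \Omega^1_X \twoheadrightarrow L^{-1}$ in codimension one. By the Campana--P\u{a}un generic nefness theorem --- valid precisely under the hypothesis that $K_X$ is pseudo-effective --- every torsion-free quotient of $\bigwedge^p \Omega^1_X$ has non-negative slope with respect to movable classes, so $L^{-1}$ is pseudo-effective; coupled with pseudo-effectivity of $L$, this forces $c_1(L) \equiv 0$ numerically. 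Finally, with $L$ numerically trivial I would equip it with a singular Hermitian metric of semi-positive curvature and combine a Bochner--Kodaira identity with harmonic theory on a K\"ahler--Einstein background provided by Yau's theorem; the outcome is that $v$ is parallel for the induced unitary connection on $\bigwedge^p TX \otimes L^*$, and a parallel section that vanishes somewhere vanishes identically, contradicting $v \ne 0$.

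For the smoothness half, I would encode $\D$ by its defining twisted $q$-form
\[\omega \in H^0\bigl(X, \Omega^q_X \otimes \det N_\D\bigr),\]
where $N_\D = TX/T\D$ and $\det N_\D = K_X^{-1} \otimes (\det T\D)^{-1}$ by comparing determinants in $0 \to T\D \to TX \to N_\D \to 0$. The Serre-duality isomorphism $\Omega^q_X \cong \bigwedge^{n-q} TX \otimes K_X$ rewrites $\omega$ as a section
\[v \in H^0\bigl(X, \bigwedge^{n-q} TX \otimes (\det T\D)^{-1}\bigr).\]
Since $c_1(T\D) = 0$, the line bundle $\det T\D$ is numerically trivial, in particular pseudo-effective; Part 1 applies with $p = n-q$ and $L = \det T\D$, giving $Z(v) = \emptyset$. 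Pointwise non-vanishing of $v$ is precisely the condition that $T\D \subset TX$ be a subbundle, so $\D$ is smooth. Involutivity is then a slope argument: non-integrability would yield a non-zero $\mathcal{O}_X$-linear Frobenius bracket $\bigwedge^2 T\D \to N_\D$; since $\bigwedge^2 T\D$ has $c_1 = 0$ while $c_1(N_\D) \equiv -K_X$ with $K_X$ pseudo-effective, the existence of such a map is incompatible with the generic-nefness constraints on quotients of $TX$. For the complementary foliation $\G$, the plan is to exhibit a holomorphic splitting of $0 \to T\D \to TX \to N_\D \to 0$ by showing that its Atiyah extension class vanishes under $\det T\D \equiv 0$ together with generic nefness, and then to verify integrability of the resulting $T\G$ by repeating the slope argument.

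The main obstacle is the third step of Part 1 --- excluding zero loci of codimension $\ge 2$ when $L$ is only numerically trivial --- because up to that point the argument is essentially formal numerical positivity, whereas here one is forced into genuinely analytic input (singular Hermitian metrics and Bochner-type identities on a non-uniruled K\"ahler background) that goes beyond what can be extracted from $c_1$ alone.
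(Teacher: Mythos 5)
A preliminary caveat: the paper never proves Theorem~\ref{T:croco}; it is quoted verbatim from \cite{croco3}, and the only internal indications of how it is proved are the citation of \cite{CPT} (the canonical sheaf of every saturated subsheaf of $TX$ is pseudo-effective when $KX$ is) and the sentence ``Demailly's Theorem implies that $\D$ is integrable''. Measured against that, your reductions are the right ones: absorbing the divisorial part of $Z(v)$ into the twist, forcing $c_1(L)\equiv 0$ by playing pseudo-effectivity of $L$ against generic nefness of $\Omega^q_X$, and translating smoothness of $\D$ into non-vanishing of $v\in H^0(X,\bigwedge^{n-q}TX\otimes(\det T\D)^{-1})$ via $\Omega^q_X\cong\bigwedge^{n-q}TX\otimes KX$ are all correct and are surely part of any proof.

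However, the three steps that actually carry the theorem are each missing or would fail. First, the codimension~$\ge 2$ case of the non-vanishing: ``$KX$ pseudo-effective'' gives neither $c_1=0$ nor $c_1<0$, so Yau/Aubin provide no K\"ahler--Einstein background; and even though $L$ psef with $c_1(L)\equiv0$ is genuinely hermitian flat, the bundle $\bigwedge^pTX\otimes L^*$ carries no curvature sign that would force a holomorphic section to be parallel (that would require $\Omega^1_X$ semipositive, far stronger than $KX$ psef). You flag this as the main obstacle, but it is precisely the content of the theorem, and your plan does not close it. Second, involutivity by slopes fails in exactly the borderline situation at hand: if the O'Neill tensor $\bigwedge^2 T\D\to N_\D$ were non-zero, its image would lift to a saturated subsheaf $\E\supset T\D$ of $TX$, and \cite{CPT} only gives $\deg\E\le 0$, hence the image has degree exactly $0$ --- no contradiction. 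Note also that \cite{CPT} constrains subsheaves of $TX$, not quotients such as $N_\D$, which may well admit subsheaves of slope $0$. This is why the paper invokes Demailly's integrability theorem \cite{Demailly} for $q$-forms with values in the dual of the pseudo-effective bundle $KX\otimes\det T\D$, a genuinely analytic input that the slope argument cannot replace. Third, the splitting: extension classes do not vanish for numerical reasons (a non-split extension $0\to\mathcal O\to E\to\mathcal O\to 0$ has everything numerically trivial), so ``the Atiyah class vanishes by generic nefness'' is not an argument, and even a holomorphic bundle splitting would not yet yield an \emph{involutive} complement. In short, your proposal is a correct frame around three absent proofs, and the heart of the matter --- emptiness of the codimension~$\ge 2$ zero locus --- is exactly what remains to be supplied from \cite{croco3}.
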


\noindent{\bf Conclusion of the proof of Lemma \ref{L:polystable}}
We already know that  $\D$ is smooth and integrable. To remind us of the integrability of $\D$ let us denoted it by $\F$ instead.
Campana-Peternell in \cite{CPT}  proved that  the canonical sheaf of every saturated coherent subsheaf of $TX$ is pseudo-effective. Since $c_1(T\F)=0$,  it follows that  $T\mathcal F$ is semi-stable bundle with respect to any polarization of $X$.

Assume now that $T\mathcal F$ is not stable. Then there exists a distribution ${\D}_0$ tangent to $\F$, in other words  $T{{\D}_0}$ is a subbundle of $T\F$,  such that $c_1(T\D_0)=0$. Since $\D_0$ satisfies the same  hypothesis of $\D$, we get that it is integrable and we can  apply Theorem \ref{T:croco} to deduce
that is is smooth, and to
exhibit another smooth foliation $\D_0^{\perp}$ such that  $TX= T\D_0 \oplus  T\D_0^{\perp}$. If we set  ${\F_0}^\perp$ as the foliation obtained as the intersection of $\D$ and $\D_0^{\perp}$, i.e.
$T{\F_0}^\perp:=T \D_0^{\perp} \cap T\mathcal F$, then
$$
T{\F}=T\D_0\oplus T{\D_0}^\perp
$$
and, consequently,  $c_1(T{\D_0}^\perp)=0$. The Lemma follows by induction. \qed

\subsection{Maximal foliations with vanishing Chern classes}
Lemma \ref{L:polystable} ensures the existence of maximal foliation with vanishing Chern classes.

\begin{cor}
If $\F$ and $\G$ are foliations on a non-uniruled projective manifold which verify  $c_1(T\F)=c_1(T\G)=0$ then there exists a foliation $\H$ containing $\F$ and $\G$ and with $c_1(T\H)=0$. Moreover, if $c_2(T\F)=c_2(T\G)=0$ then we can choose $\H$ with $c_1(T\H)=c_2(T\H)=0$.
\end{cor}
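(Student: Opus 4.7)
The plan is to take $T\H$ to be the saturation of $T\F+T\G$ inside $TX$ and verify this defines a foliation with the required Chern class vanishings.

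For the first statement, the task is to show $c_1(T\H)=0$ in $H^{1,1}(X,\R)$; then Theorem \ref{T:croco} automatically upgrades $T\H$ to an involutive subsheaf, yielding the foliation $\H$ containing both $\F$ and $\G$. The computation goes as follows. The short exact sequence
\[
0 \to T\F \cap T\G \to T\F \oplus T\G \to T\F + T\G \to 0
\]
gives $c_1(T\F+T\G)=-c_1(T\F\cap T\G)$, and passing to the saturation adds the class of an effective divisor $D$, so $c_1(T\H) = -c_1(T\F\cap T\G)+[D]$. Since $T\F$ is semi-stable of slope zero by Lemma \ref{L:polystable}, one has $c_1(T\F\cap T\G)\cdot H^{n-1}\le 0$, hence $c_1(T\H)\cdot H^{n-1}\ge 0$. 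But $T\H$ is saturated in $TX$, so by the Campana-Peternell result recalled in Section \ref{S:polystable}, $-c_1(T\H)$ is pseudo-effective, giving the reverse inequality. Thus $c_1(T\H)\cdot H^{n-1}=0$. A closed positive current representing the pseudo-effective class $-c_1(T\H)$ has total mass $-c_1(T\H)\cdot H^{n-1}=0$ against $\omega^{n-1}$ and therefore vanishes as a current, so $c_1(T\H)=0$. The same argument forces $D=0$, so $T\F+T\G$ and $T\H$ agree in codimension one.

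For the second statement, assume additionally $c_2(T\F)=c_2(T\G)=0$. By Lemma \ref{L:polystable} write $T\H=\bigoplus_{\gamma} T\H_{\gamma}$ as a direct sum of stable slope-zero summands. Since all first Chern classes vanish, $c_2(T\H)=\sum_\gamma c_2(T\H_\gamma)$, so it suffices to show $c_2(T\H_\gamma)=0$ for every $\gamma$. The key claim is that each $T\H_\gamma$ is isomorphic to some stable summand of $T\F$ or $T\G$: the projection $\pi_\gamma:T\H\to T\H_\gamma$ sends $T\F+T\G$ to a subsheaf of $T\H_\gamma$ whose cokernel is a quotient of the codimension-two torsion sheaf $T\H/(T\F+T\G)$, and since $T\H_\gamma$ is locally free this image must be nonzero. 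Therefore some stable summand of $T\F$ (or $T\G$) admits a nonzero morphism to $T\H_\gamma$, and stability of source and target together with the equality of slopes forces this to be an isomorphism. Finally, for each stable summand $T\F_\alpha$ of $T\F$, Bogomolov-Gieseker gives $c_2(T\F_\alpha)\cdot H^{n-2}\ge 0$; summing and using $c_2(T\F)\cdot H^{n-2}=0$ forces each summand's contribution to vanish, and the equality case (Donaldson-Uhlenbeck-Yau) implies $T\F_\alpha$ is unitary flat, so $c_2(T\F_\alpha)=0$ in cohomology. The same holds for the summands of $T\G$, and combining with the claim yields $c_2(T\H_\gamma)=0$ for every $\gamma$.

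The main obstacle is the polystability bookkeeping: the numerical argument that forces $c_1(T\H)=0$ (particularly the passage from vanishing of $c_1\cdot H^{n-1}$ to vanishing in cohomology for pseudo-effective classes) and the identification of each stable summand of $T\H$ with one already present in $T\F$ or $T\G$. These steps lean on the Donaldson-Uhlenbeck-Yau correspondence, exactly the tool used elsewhere in this section.
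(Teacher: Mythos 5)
Your argument is correct, but it is organized differently from the paper's. The paper works with the single morphism $\varphi\colon T\F\oplus T\G\to TX$, $(v,w)\mapsto v+w$: non-uniruledness bounds $\deg(\mathrm{im}\,\varphi)$ from above by $0$, while polystability of $T\F\oplus T\G$ (Lemma \ref{L:polystable}) bounds $\deg(\ker\varphi)$ from above by $0$; since the two degrees are opposite, both vanish and $\ker\varphi$ splits off as a polystable direct summand of $T\F\oplus T\G$. This exhibits $T\H=\mathrm{im}\,\varphi$ at once as a locally free polystable complement --- no saturation step is needed --- and the Whitney formula $c(T\H)=c(T\F)\cdot c(T\G)\cdot c(\ker\varphi)^{-1}$ delivers both Chern class vanishings simultaneously, because $\ker\varphi\simeq T\F\cap T\G$ is a direct summand of the unitary flat bundle $T\F$ (Corollary \ref{C:flathermit}) and hence has trivial total Chern class. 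Your route reaches $c_1(T\H)=0$ by essentially the same numerical squeeze (semistability on one side, Campana--Peternell pseudo-effectivity on the other, plus the standard fact that a pseudo-effective class of zero degree vanishes), but for $c_2$ you must re-identify each stable summand of $T\H$ with one of $T\F$ or $T\G$ by a Jordan--H\"older/Schur argument and then invoke the Bogomolov--L\"ubke equality case summand by summand. That is all valid --- the nonzero map between stable slope-zero summands is injective with torsion cokernel, and since both summands are direct factors of subbundles of $TX$ with numerically trivial determinants, the determinant of the map is a nowhere-vanishing section of a numerically trivial line bundle, hence the map is an isomorphism --- but it is exactly the bookkeeping that the paper's direct-sum decomposition makes automatic. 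What your version buys is a fully explicit justification of the $c_2$ vanishing, which the paper compresses into the single phrase about the Chern polynomial of the image.
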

\begin{proof}
Consider the morphism $\varphi: T\F \oplus T\G \to TX$ sending $(v,w)$ to $v+w$. Since
$X$ is not uniruled then the image of $\varphi$ has non-positive degree with respect to any polarization.
The polystability of $T\F$ and $T\G$ implies that the kernel of $\varphi$ has degree zero and is a polystable summand  of both sheaves. On the one hand we have that the image of $\varphi$ is locally free; and on the other hand both the kernel and the image of $\varphi$ have  vanishing Chern classes. Theorem \ref{T:croco} implies the image of $\varphi$ is an involutive
subbundle of $TX$  with Chern polynomial $c(T\F) \cdot c(T\G) \cdot ( c(\ker \varphi)^ {-1})$,  and is the tangent sheaf of the sought foliation.
\end{proof}

\subsection{Hermite-Einstein structure on the tangent sheaf}
Let us recall a  result by  Donaldson \cite{Donaldson}.

\begin{thm}
Let $E$ a polystable holomorphic vector bundle over a projective manifold. Then $E$ carries an Hermite-Einstein metric. In particular, if $E$ satisfies $c_1(E)=c_2(E)=0$ then this metric is flat.
\end{thm}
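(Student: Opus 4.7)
The statement is the Kobayashi--Hitchin correspondence (Donaldson--Uhlenbeck--Yau) together with the standard consequence that vanishing of the first two real Chern classes of a Hermite--Einstein bundle forces flatness. The plan is to first reduce the existence statement to the stable case, then invoke Donaldson's theorem in that case, and finally deduce flatness from a Bogomolov--L\"ubke type pointwise inequality.

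\textbf{Step 1: Reduction to the stable case.} Write $E=\bigoplus_i E_i$ with each $E_i$ stable (possible by the definition of polystability). A direct-sum Hermitian metric $h=\bigoplus_i h_i$ is Hermite--Einstein on $E$ with common Einstein constant $\lambda$ if and only if each $h_i$ is Hermite--Einstein on $E_i$ with constant $\lambda$, and this in turn forces $\mu(E_i)=\mu(E)$ for all $i$ (which is automatic, since polystability requires equal slopes). Hence it suffices to produce an Hermite--Einstein metric on an arbitrary stable bundle.

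\textbf{Step 2: Existence on a stable bundle.} This is Donaldson's theorem. Fixing a reference metric $h_0$ and an ample polarization $\omega$, one considers the parabolic evolution
\[
h^{-1}\frac{\partial h}{\partial t}\;=\;-\bigl(\sqrt{-1}\,\Lambda_\omega F_h-\lambda\,\mathrm{Id}\bigr),
\]
with Einstein constant $\lambda=\frac{2\pi\,\mu(E)}{(n-1)!\,\mathrm{vol}_\omega(X)}$. Donaldson introduced a convex functional (the Donaldson functional) on the space of Hermitian metrics whose gradient flow is precisely this equation; he then showed that stability of $E$ is equivalent to a uniform $C^0$-bound on $\log(h_0^{-1}h_t)$ along the flow, which upgrades via standard elliptic bootstrapping to smooth convergence to an Hermite--Einstein metric. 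This is the deep step; I would quote it rather than reprove it.

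\textbf{Step 3: Flatness when $c_1(E)=c_2(E)=0$.} Given the Hermite--Einstein metric $h$, the vanishing of $c_1(E)$ in $H^{\ast}(X,\mathbb{R})$ forces $\mu(E)=0$, hence $\lambda=0$, so $\Lambda_\omega F_h=0$; the bundle is Hermitian Yang--Mills. Now apply the Bogomolov--L\"ubke pointwise inequality: for any Hermitian Yang--Mills connection,
\[
\Bigl(2r\,c_2(E,h)-(r-1)\,c_1(E,h)^{2}\Bigr)\wedge\omega^{n-2}
\;\geq\; c_{n,r}\,\bigl|F_h^{0}\bigr|^{2}\,\omega^{n},
\]
where $F_h^{0}$ denotes the trace-free part of the curvature and $c_{n,r}>0$. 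Integrating and using $c_1(E)=c_2(E)=0$ in real cohomology gives $F_h^{0}\equiv 0$, i.e.\ $E$ is projectively flat. The trace part $\mathrm{tr}\,F_h$ is the curvature of the induced metric on $\det E$; since this is an Hermite--Einstein line bundle with $c_1(\det E)=c_1(E)=0$, its curvature is a harmonic representative of the zero class and therefore vanishes identically. Combining, $F_h=0$, so $h$ is flat.

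\textbf{Main obstacle.} The substantive point is Step~2: producing the Hermite--Einstein metric on a stable bundle, which is the content of Donaldson's theorem and is not something one reproves on the fly. Steps~1 and~3 are essentially formal once this is in hand, so in practice the proof in the paper will consist of citing \cite{Donaldson} and invoking the L\"ubke inequality.
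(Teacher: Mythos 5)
Your outline is correct and matches the paper's treatment: the paper simply quotes this as Donaldson's theorem with a citation to \cite{Donaldson} and gives no proof, exactly as you anticipate in your final remark. Your reduction to the stable case, the appeal to Donaldson's existence theorem, and the deduction of flatness from the Bogomolov--L\"ubke inequality together with the vanishing of the trace part of the curvature constitute the standard and correct argument for the statement.
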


We can apply Donaldson's Theorem combined with Lemma \ref{L:polystable} to deduce the following corollary.

\begin{cor}\label{C:flathermit}
If $\F$ is a foliation on  projective non uniruled manifold $X$ such that $c_1(T\mathcal F)=0$, then there exists  an Hermite-Einstein metric on $T\F$. In particular,  $T\F$  is flat hermitian whenever $c_2(T\mathcal F)=0$.
\end{cor}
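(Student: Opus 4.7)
The plan is simply to chain Lemma \ref{L:polystable} with the statement of Donaldson's theorem recalled just above. First I would fix any ample polarization $H$ on $X$. Since $X$ is a non-uniruled projective manifold and $c_1(T\F)=0$, Lemma \ref{L:polystable} applies and yields that $\F$ is a smooth foliation (so $T\F$ is a genuine holomorphic vector subbundle of $TX$, not just a coherent sheaf), and that $T\F$ decomposes as a direct sum $\bigoplus_i T\F_i$ of stable subbundles, each with $c_1(T\F_i)=0$. In particular $T\F$ is polystable with respect to $H$, which is precisely the hypothesis of Donaldson's theorem.

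Applying Donaldson's theorem to each stable summand $T\F_i$ (all of which have slope zero, so they can be combined), I obtain an Hermite-Einstein metric on $T\F$. This gives the first assertion.

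For the \textbf{in particular} part, I would observe that $c_2(T\F)=0$ together with $c_1(T\F)=0$ forces the vanishing of $c_2$ on each stable summand as well, via the identity $c(T\F)=\prod_i c(T\F_i)$ in $H^*(X,\R)$ combined with $c_1(T\F_i)=0$ (so no cross terms appear and the total $c_2$ is the sum of the individual $c_2$'s of nonnegative type — the Bogomolov-style inequality for stable bundles forces each $c_2(T\F_i)=0$ individually). Then the second part of Donaldson's theorem gives flatness of the Hermite-Einstein metric on each summand, hence on $T\F$.

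There is no real obstacle here: the content of the corollary is already packaged into the two preceding results, and the only point that requires a line of justification is that the polystable decomposition is compatible with the vanishing of $c_2$ on each factor, so that Donaldson's flatness conclusion can be invoked summand by summand.
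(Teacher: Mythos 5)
Your proposal is correct and is essentially the paper's argument: the corollary is obtained by combining Lemma \ref{L:polystable} with Donaldson's theorem as stated. The only difference is that your summand-by-summand reduction (and the Bogomolov-type argument forcing $c_2(T\F_i)=0$ for each stable factor) is an unnecessary detour, since the version of Donaldson's theorem quoted in the paper already applies to the polystable bundle $T\F$ as a whole, with $c_1(T\F)=c_2(T\F)=0$ directly yielding flatness.
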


If $\F$ is a foliation satisfying the hypothesis of Theorem \ref{TI:A} then
$T\F$ is a flat unitary  vector bundle and as such is defined by an unitary representation
$$\rho:\pi_1(X)\rightarrow U(r,\C).$$

The next result, also from  \cite{croco3},  implies that the
induced representation $\det(\rho) : \pi_1(X) \to U(1,\C)$
has finite image.

\begin{prop}\label{P:torsion}
If $\D$ be a distribution with $c_1(T\D)=0$  on a non-uniruled projecive manifold  then its canonical bundle $K\D={\det{T\D}^*}$ is a  torsion line bundle.
\end{prop}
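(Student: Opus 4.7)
The plan combines the smooth splitting from Theorem~\ref{T:croco}, the polystability from Lemma~\ref{L:polystable}, Donaldson's theorem, and positivity results for non-uniruled manifolds.

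First I would invoke Theorem~\ref{T:croco} to realise $\D$ as a smooth foliation admitting a complementary smooth foliation $\G$ with $TX = T\D \oplus T\G$. Writing $p = \dim \D$, the inclusion $\det T\D \hookrightarrow \bigwedge^p TX$ coming from this splitting furnishes a nowhere-vanishing section $v \in H^0(X, \bigwedge^p TX \otimes K\D)$. Next, Lemma~\ref{L:polystable} supplies polystability of $T\D$ of vanishing slope, so by Donaldson's theorem $T\D$ carries a Hermite--Einstein metric; tracing the Einstein equation shows that the induced curvature on $\det T\D$ is primitive, and since it also represents the zero class in $H^{1,1}(X,\R)$, the $\partial\bar\partial$-lemma forces it to vanish identically. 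Hence $K\D$ carries a flat hermitian metric and corresponds to a unitary character $\chi \colon \pi_1(X) \to U(1,\C)$.

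Second, I would establish that $K\D$ is pseudo-effective: non-uniruledness of $X$ gives pseudo-effectivity of $K_X$ by \cite{BDPP}, and the natural surjection $\Omega^1_X \twoheadrightarrow T\D^{*}$, combined with the Campana--Paun generic semi-positivity theorem, then yields pseudo-effectivity of $K\D = \det T\D^{*}$.

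The decisive step, and the one I expect to be the main obstacle, is to deduce that $\chi$ has finite image from the combination of flatness, pseudo-effectivity, and the foliation structure. Flatness alone does not suffice: on an abelian variety every element of $\Pic^{0}$ is flat and pseudo-effective, yet only a dense countable subset is torsion, so the proof must genuinely exploit the hypothesis that $K\D^{*}$ is the top exterior power of an involutive saturated subsheaf of $TX$ on a non-uniruled $X$. The natural strategy, and the one carried out in \cite{croco3}, is to leverage the nowhere-vanishing section $v$ above by applying Theorem~\ref{T:croco} to appropriate tensor powers of $\bigwedge^p TX \otimes K\D$, and by passing to finite \'etale covers that would trivialise candidate torsion multiples of $K\D$, in order to rule out $\chi$ having infinite image by contradicting either the non-uniruledness of $X$ or the smoothness of the splitting of $TX$.
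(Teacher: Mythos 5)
The paper does not actually prove this proposition: it is quoted verbatim from \cite{croco3}, so there is no internal argument to match your attempt against. Judged on its own terms, your proposal has a genuine gap, and it is exactly the one you flag yourself. Your first two stages are correct but essentially content-free for the purpose at hand: since $c_1(T\D)=0$ in $H^2(X,\R)$, the line bundle $K\D$ lies in $\Pic^\tau(X)$, and \emph{every} such line bundle on a compact K\"ahler manifold is unitary flat (your Hermite--Einstein computation is a valid but roundabout way to see this) and trivially pseudo-effective (its class is $0$). As your own abelian-variety example shows, these properties are shared by all of $\Pic^0(X)$, of which only the countable torsion part consists of torsion bundles. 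So everything you have established up to that point cannot distinguish $K\D$ from a non-torsion numerically trivial line bundle; the entire content of the proposition is the finiteness of the character $\chi$, and that is precisely the step you do not carry out.

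The sketched strategy for the decisive step does not close the gap. Theorem~\ref{T:croco} takes as input a \emph{nonzero} section of $\bigwedge^p TX\otimes L^*$ and returns only that this section is nowhere vanishing; applying it to $v$ or to tensor powers $v^{\otimes m}$ (which in any case live in $(\bigwedge^p TX)^{\otimes m}\otimes K\D^{\otimes m}$, outside the scope of the stated theorem) reproduces information you already have and produces no section of any power of $K\D$ alone, which is what one would need to conclude torsion (a numerically trivial line bundle with a nonzero section is trivial). Likewise, ``passing to finite \'etale covers that trivialise candidate torsion multiples of $K\D$'' presupposes that some multiple is trivial, i.e.\ presupposes the conclusion. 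A genuine proof must extract more from the foliated structure than flatness and pseudo-effectivity --- for instance from the defining $p$-form of the transverse foliation $\G$ with values in $\det T\D$ and the Hodge theory of the associated rank-one local system, or from whatever mechanism \cite{croco3} actually employs --- and none of that is present in your write-up. As it stands, the proposal reduces the proposition to a citation of the very reference it was meant to replace.
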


As a corollary we obtain  the following result.

\begin{cor}\label{C:finitemonodromy} If the image $\mbox{Im}\ \rho$ of $\rho$ is virtually solvable (i.e contains a solvable group of finite index), then $Im\  \rho$ is finite.
\end{cor}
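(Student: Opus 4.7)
The plan is to reduce the virtually solvable case to the virtually abelian case by exploiting the compactness of $U(r,\mathbb C)$, then to diagonalize after passing to a finite \'etale cover, and finally to invoke Proposition \ref{P:torsion} on each flat line subbundle.

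First I would observe that any virtually solvable subgroup $G \subset U(r,\mathbb C)$ is in fact virtually abelian. Indeed, let $G_0 \subset G$ be a solvable subgroup of finite index; its closure $\overline{G_0}$ inside the compact Lie group $U(r,\mathbb C)$ is a compact solvable Lie group. The identity component of a compact connected solvable Lie group is a torus, so $\overline{G_0}$ is virtually abelian, and consequently $\mathrm{Im}\,\rho$ is virtually abelian.

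Next, replacing $X$ by a suitable finite \'etale cover $\widetilde X \to X$ (still non-uniruled and projective), I may assume $\mathrm{Im}\,\rho$ itself is abelian. Since abelian subgroups of the unitary group are simultaneously diagonalizable, the flat unitary bundle $T\F$ decomposes as an orthogonal direct sum of flat Hermitian line bundles
\[
T\F = \bigoplus_{i=1}^{r} L_i,
\]
each $L_i$ corresponding to a character $\pi_1(\widetilde X) \to U(1,\mathbb C)$, with $c_1(L_i)=0$.

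Now each rank-one subbundle $L_i$ of $TX$ is automatically involutive (the Lie bracket on a line bundle vanishes by antisymmetry), and being a direct summand of $T\F$ it is saturated in $TX$. Therefore $L_i = T\F_i$ for a one-dimensional foliation $\F_i$ on $\widetilde X$ with $c_1(T\F_i)=0$. Proposition \ref{P:torsion} applied to $\F_i$ then forces $K\F_i = L_i^{*}$, and hence $L_i$ itself, to be a torsion line bundle. Consequently the monodromy of $L_i$ is finite for every $i$, and so is the monodromy of their direct sum. Lifting back to $X$ and noting that we only enlarged the kernel of $\rho$ by a subgroup of finite index, we conclude that $\mathrm{Im}\,\rho$ is finite.

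The only place requiring some care is the very first step, passing from virtually solvable to virtually abelian; this is purely a statement about compact Lie groups and is the mild obstacle. Everything after that is a direct consequence of simultaneous diagonalization, the triviality of Lie brackets on line bundles, and Proposition \ref{P:torsion}.
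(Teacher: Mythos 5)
Your proposal is correct and follows essentially the same route as the paper: pass to a finite \'etale cover, split the flat unitary bundle $T\F$ into flat line subbundles, and apply Proposition \ref{P:torsion} to each rank-one piece to see that the corresponding characters are torsion, hence finite. The only difference is cosmetic: the paper obtains the splitting into one-dimensional representations from the semisimplicity of unitary representations together with the fact that the image lies in a connected solvable algebraic group (Lie--Kolchin), whereas you first reduce to the virtually abelian case via compactness of $U(r,\C)$ and then diagonalize simultaneously; both steps are valid.
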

\begin{proof}One can find a finite etale covering $e:X^e\rightarrow X$ such that $\mbox{Im}\ e^*\rho$ is solvable and contained in a connected solvable linear algebraic group. This representation is semi-simple (being unitary), hence splits as a sum of one dimensional representations.
The finiteness of $\mbox{Im}\  \rho$ follows from  Proposition \ref{P:torsion}.
\end{proof}

\subsection{Proof of Theorem \ref{TI:A}}
Let $\F$ be a holomorphic foliation on a non uniruled projective manifold $X$ with $c_1(T\F)=c_2(T\F)=0$.
Let us call $\F^\perp$ the (non necessarily unique) complementary foliation of $\F$ whose existence is ensured by  Theorem \ref{T:croco},
 and consider the universal covering projection  $\pi:\tilde X\rightarrow X$.

The foliation ${\F}^\perp$ is defined by a holomorphic  $1$-form $\omega$ with values in $T\mathcal F$ . This latter being flat hermitian (Corollary \ref{C:flathermit}), we get $\nabla\omega=0$ using the  K\"ahler identities (here, $\nabla$ denotes the unitary flat connection attached to $E=T\mathcal F$). In particular,  ${\F}^\perp$ is transversely Euclidean; moreover, this transversal hermitian structure is complete by compactness of $X$. Take a primitive $F=(F_1,..., F_r)$ of $\pi^*\omega$ on $\tilde X$. The restriction of $F$ to any leaf of the foliation $\pi^*\F$  is a covering map onto ${\mathbb C}^r$, hence an isomorphism by simple connectedness. Moreover, any such leaf is equipped with the Euclidean metric induced by $\sum {|dF_i|}^2$.

On the other hand, the leaves of $\pi^* {\F}^\perp$ must coincide with connected components of the  levels of $F$. We claim that for every $x\in{\C}^r$, $F^{-1}(x)$ is connected. Indeed, assume that  $F^{-1}(x)$ is a union of distinct leaves of $\pi^* {\F}^\perp$, say $\L_j$ for $j$ belonging to some   set $J$.
For each $j\in J$ denote by $V_j$ the saturation of $\L_j$ by $\pi^* {\F}$. As the intersection of any leaf of  $\pi^* \F$  with $F^{-1}(x)$ consists in exactly one point ($F$ restricted to the leaves of $\pi^* \F$ is a covering map) it follows  that the sets $V_j$, $j \in J$, are pairwise  disjoint open subsets of $\tilde X$ such that $\tilde X= \cup_{j \in J} V_j$. Connectedness of $\tilde X$ implies that $J$ has cardinality one.

Now, fix a leaf $\L\simeq{\C}^r$ of $\pi^*\F$ and a leaf $\L^\perp$ of $\pi^*\F^\perp$ . For every $a\in\tilde X$, consider the leaf $\L_a$ of $\pi^*\F$ and the leaf ${\L_a^\perp}$ such that $\{a\}=\L_a\cap{\L_a^\perp}$. We get a well defined bijection
$$\tilde X\rightarrow \L\times\L^\perp$$
sending $a$ to $(a_{\L},a_{\L^\perp})$ such that $\{a_\L\}={\L_a^\perp}\cap \L$ and $\{a_{\L_a^\perp}\}={\L}_a\cap \L^\perp$. This bijection is a biholomorphism by a  standard flow-box argument, and we get the expected trivialization $\tilde X\simeq{\mathbb C}^r\times Y$. \qed

\medskip

As the reader can easily verify the above argument also proves Theorem \ref{TI:Abis}.

\section{Structure Theorem}

\subsection{Shafarevich map}
Let $X$ be a smooth projective algebraic variety and $\rho:\pi_1(X)\rightarrow G$ a representation of the fundamental group. One can define the Shafarevich map for $\rho$ as follows: it is a surjective rational morphism with connected fibers
  $$\mbox{sh}_\rho:X\rightarrow \mbox{Sh}_\rho (X)$$
where  $\mbox{Sh}_\rho (X)$ is a normal algebraic variety such that for any irreducible subvariety $V\subset X$ not contained in a union of countably many proper algebraic subvarieties, $\mbox{sh}_\rho (V)=$ point iff $\rho (\pi_1(V))$ is finite.
It is easy to see that the existence of $Sh_\rho$ is unique up to birational equivalence.

Koll\'ar (\cite{K}) has proved that $\mbox{Sh}_\rho$ always exists with the additional property that it is a proper morphism restricted to some  Zariski open set of $X$. These constructions have been extended to compact K\"ahler manifolds by Campana (\cite{c}).

\subsection{Zuo's theorem}
Before stating Zuo's Theorem \cite{Z} let us recall some  definitions from the theory of algebraic groups.
A connected algebraic group $G$ is called  {\it almost simple} if it is non commutative and all its proper algebraic normal subgroups are finite.
If  $S$ is a semi-simple connected algebraic group then a classical result asserts that $S$ is isogenous to a  product of almost simple algebraic groups.

\begin{thm}\label{T:zuo}
Let $\rho\colon \pi_1(X)\to G$ be a Zariski dense representation into an almost simple algebraic group. Then there exists a finite \'etale covering $e\colon X^e\to M$ such that the pull-back representation $e^*\rho$ factors through the Shafarevich map ${\rm sh}_{e^*\!\rho}\colon X^e\to {\rm Sh} _{e^*\!\rho}(X^e)$ and the Shafarevich variety ${\rm Sh}_{e^*\rho}(X^e)$ is projective algebraic of general type.
\end{thm}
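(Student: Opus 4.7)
The plan is to combine Koll\'ar's existence theorem for the Shafarevich map with non-abelian Hodge theory to produce a geometric map whose target inherits enough positivity to be of general type, then transfer this positivity to the Shafarevich variety.

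First I would invoke Koll\'ar's construction, recalled just above, to produce the Shafarevich morphism $\mathrm{sh}_\rho : X \dashrightarrow \mathrm{Sh}_\rho(X)$, proper over a Zariski-open subset of the base, with the property that $\rho$ has finite image on the fundamental group of a very general fibre. A finite \'etale cover $e: X^e \to X$ can be chosen so as to trivialise these finite monodromies along such fibres, giving a genuine factorisation of $e^*\rho$ through $\mathrm{sh}_{e^*\rho}$. It then suffices to prove that $\mathrm{Sh}_{e^*\rho}(X^e)$ is projective and of general type.

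The second step is a case split according to whether the Zariski closure of $\mathrm{Im}(\rho)$ in $G(\mathbb{C})$ lies in a compact real form. In the bounded case, Simpson's non-abelian Hodge theorem, together with Zariski density and almost simplicity, deforms $\rho$ to a complex variation of Hodge structure on $X^e$ with nontrivial period map. In the unbounded case, Corlette's existence theorem produces a $\rho$-equivariant harmonic map $f: \widetilde{X^e} \to G(\mathbb{R})/K$, which the Sampson--Siu Bochner identity forces to be pluriharmonic. In both scenarios the universal property of the Shafarevich morphism identifies its fibres with connected components of the level sets of the period map, resp.\ of $f$---any larger horizontal locus would violate the defining property of $\mathrm{sh}_{e^*\rho}$, since on such a locus the monodromy would stay finite.

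Third, I would deduce general type from the positivity of the target geometry. In the VHS case, Griffiths' curvature semi-positivity of the Hodge bundles on a classifying space of polarised Hodge structures, combined with Kawamata--Viehweg weak positivity for direct images of relative dualising sheaves, yields bigness of the canonical divisor of a smooth model of $\mathrm{Sh}_{e^*\rho}(X^e)$. In the harmonic-map case, the negativity of the sectional curvature of the symmetric space of noncompact type along directions transverse to the null directions of $df$, combined with Mok's strong rigidity for maps into such symmetric spaces, gives the analogous conclusion. The main obstacle is upgrading positive Kodaira dimension to full general type: one must verify that the induced positivity on the Shafarevich variety is strict in every tangent direction, and it is here that Zariski density into an almost simple $G$ is essential---it forbids the period or harmonic map from factoring through a nontrivial quotient of $G$, ruling out persistent flat directions along the image and turning semi-positivity into bigness on the whole of $\mathrm{Sh}_{e^*\rho}(X^e)$.
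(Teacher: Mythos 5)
This statement is quoted verbatim from Zuo's paper \cite{Z} and used as a black box; the paper offers no proof of it, so there is nothing internal to compare against and you are in effect reproving a deep cited theorem. Your sketch does follow the broad architecture of Zuo's actual argument (Shafarevich map, harmonic maps/non-abelian Hodge theory, positivity pushed down to the Shafarevich variety), but two of its steps have gaps that are fatal rather than cosmetic.

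First, the dichotomy is set up at the wrong place. You split according to whether the Zariski closure of $\mathrm{Im}(\rho)$ sits in a compact real form of $G$, treating the bounded case by deforming to a complex variation of Hodge structure and the unbounded case by a Corlette harmonic map into $G(\mathbb{R})/K$. But the representations this theorem is applied to in the present paper are \emph{unitary} with infinite image: they are bounded at the given archimedean place, so your harmonic map is constant, and any VHS they underlie has constant period map, so neither branch of your argument produces a nonconstant map or any positivity. (Simpson's deformation to a VHS also either requires rigidity or replaces $\rho$ by a different representation, whose Shafarevich map need not agree with that of $\rho$.) The unboundedness that actually drives Zuo's proof lives at the \emph{other} places of the trace field: a Galois-conjugate archimedean place --- exactly the mechanism in the Faltings example of Section \ref{S:Faltings}, where the same arithmetic group is precompact in $SU(2,\C)$ but a cocompact lattice in $SL(2,\mathbb{R})$ under the second embedding --- or a non-archimedean place, where one needs Gromov--Schoen harmonic maps into Bruhat--Tits buildings and the resulting spectral coverings and logarithmic $1$-forms. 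Your proposal never leaves the fixed real form, so it does not cover the cases the theorem exists for. Second, the two remaining key steps are asserted rather than proved: the existence of a finite \'etale cover along which $e^*\rho$ genuinely factors is not automatic (the images $\rho(\pi_1(F))$ of general fibres are finite, but their normal closure in $\pi_1(X)$ need not be, and killing it by a \emph{finite} cover is part of what must be established); and the upgrade from semi-positivity of Hodge or energy densities to bigness of the canonical class of $\mathrm{Sh}_{e^*\rho}(X^e)$ --- the actual content of ``general type'' --- is justified only by the heuristic that Zariski density into an almost simple group forbids flat directions, which is not an argument.
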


\subsection{Tangency to the fibers of the  Shafarevich map}
Recall that $T\mathcal F$ is flat hermitian, hence comes from a representation $\rho:\pi_1(X)\rightarrow U(r)$.
From now on, let us deal with the Zariski closure $G$ of $\mbox{Im}\  \rho \subset GL(r,\mathbb C)$. After taking a finite covering of $X$, one can assume that $\mbox{Im}\ \rho$ is torsion free and $G$ is connected.

Write $G$ as $R\rtimes S$ where $R$ is the solvable radical  of $G$ and $S$ is a semi-simple group. The latter  group decomposes as
 $$ S=(S_1\times S_2\times....\times S_p)/H$$
where the $S_i$ are quasi-simple and $H$ is a finite subgroup of $S_1\times S_2\times....\times S_p$.

Let $H_i$ be the image of $H$ under the projection of $S_1\times...\times S_p$ onto the $i^{\mbox{th}}$ factor.

Now, projecting $\rho$ to the almost simple group $G_i:=S_i/(H_i)$ in the semisimple factor, we obtain a Zariski dense representation
$$\rho_i:\pi_1(X)\rightarrow G_i.$$
Consider the Shafarevich map ${\rm sh}_{e^*\!\rho_i}\colon X^e\to {\rm Sh} _{e^*\!\rho_i}(X^e)$ as  in  Theorem  \ref{T:zuo} (note that the finite etale covering $X^e$ may depend on $i$).

\begin{prop}\label{Ftangent}
The pull-back foliation $e^*\mathcal F$ is tangent to the fibers of ${\rm sh}_{e^*\!\rho_i}$. Consequently,
 the foliation $\mathcal F$ is tangent to the fibers of ${\rm sh}_{\!\rho_i}$.
\end{prop}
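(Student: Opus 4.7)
The plan is to lift the meromorphic map $\sigma_i := {\rm sh}_{e^*\rho_i}$ to the universal coverings and show that the lift is constant on each leaf of $\pi^*(e^*\mathcal{F})$, which is equivalent to the desired tangency.

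First, applying Theorem \ref{TI:A} to $X^e$, we may identify $\tilde X^e \simeq \mathbb{C}^r \times Y$ so that the leaves of $\pi^*(e^*\mathcal{F})$ are the slices $\mathbb{C}^r \times \{y\}$, and the deck group $\Gamma := \pi_1(X^e)$ acts diagonally: on the $\mathbb{C}^r$-factor by affine transformations whose linear part is $e^*\rho$ (this is the equivariance of the developing map of the complementary foliation exhibited in the proof of Theorem \ref{TI:A}), and on $Y$ by biholomorphisms. Pick a lift $\widetilde{\sigma_i}\colon \tilde X^e \to \widetilde{Sh_i}$. By Theorem \ref{T:zuo}, $e^*\rho_i$ factors as $\bar\rho_i \circ (\sigma_i)_*$, and hence $\widetilde{\sigma_i}$ is equivariant with respect to the actions of $\Gamma$ on $\tilde X^e$ and, via $(\sigma_i)_*$, on $\widetilde{Sh_i}$. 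Showing that $\widetilde{\sigma_i}$ is constant on every slice $\mathbb{C}^r \times \{y\}$ is equivalent to $\sigma_i$ being constant on every leaf of $e^*\mathcal F$, which is precisely the claim.

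Fix $y_0 \in Y$ and let $\Gamma_{y_0} \subset \Gamma$ be the stabilizer of $y_0$, canonically identified with the fundamental group of the leaf $L_{y_0} = \pi(\mathbb{C}^r \times \{y_0\})$ of $e^*\mathcal F$. The action of $\Gamma_{y_0}$ on $\mathbb{C}^r$ is properly discontinuous by affine isometries with linear parts in $U(r)$, so by the Bieberbach--Auslander theorem $\Gamma_{y_0}$ is virtually polycyclic, and in particular virtually solvable. Consequently $e^*\rho_i(\Gamma_{y_0})$ is a virtually solvable subgroup of $G_i$ contained in a compact subgroup of $G_i$ (since $e^*\rho_i$ is unitary).

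The main obstacle is to upgrade this virtual solvability to outright finiteness of $e^*\rho_i(\Gamma_{y_0})$ inside the almost simple group $G_i$; once this is in hand, the defining property of the Shafarevich map forces $\widetilde{\sigma_i}$ to be constant on $\mathbb{C}^r \times \{y_0\}$ and the proof concludes. My approach would parallel Corollary \ref{C:finitemonodromy}: after a further finite \'etale cover, the Zariski closure of $e^*\rho_i(\Gamma_{y_0})$ becomes a connected compact solvable subgroup of $G_i$, necessarily a real torus; the associated rank-one summands of the relevant unitary local system on $L_{y_0}$ should then have torsion monodromy by an adaptation of Proposition \ref{P:torsion}. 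The delicate point, which I expect to be the main difficulty, is that Proposition \ref{P:torsion} is stated for distributions on the ambient $X$ rather than on a leaf, so to close the argument one must either replace $L_{y_0}$ by its Zariski closure (or some algebraic hull) and apply the proposition there, or carry out a Simpson-type isotypic decomposition of $T\mathcal F$ with respect to the $G_i$-part of the semisimple quotient of $\rho$ in order to transport the torsion statement back into $G_i$.
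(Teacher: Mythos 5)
Your strategy diverges from the paper's and, as you yourself half-suspect, it has gaps that I do not see how to close. The paper's proof is much more direct: assuming $e^*\F$ is \emph{not} tangent to the fibers of ${\rm sh}_{e^*\!\rho_i}$, it uses the splitting $\tilde X \simeq \C^r\times Y$ of Theorem \ref{TI:A} to produce a subspace $\C^p\times Z_Y$ ($p\ge 1$) on which the composition of the covering projection with ${\rm sh}_{e^*\!\rho_i}$ is a local biholomorphism, and then derives a contradiction with the fact that ${\rm Sh}_{e^*\!\rho_i}(X^e)$ is of general type, hence \emph{measure hyperbolic}, while $\C^p\times Z_Y$ is not. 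The only input from Zuo's theorem that is used is the general-type conclusion; the factorization of the representation plays no role.

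The gaps in your route are the following. First, and most seriously, your endgame ``once $e^*\rho_i(\Gamma_{y_0})$ is finite, the defining property of the Shafarevich map forces $\widetilde{\sigma_i}$ to be constant on $\C^r\times\{y_0\}$'' does not follow: the defining property of ${\rm sh}_\rho$ concerns irreducible \emph{algebraic} subvarieties $V\subset X$ and the image of $\pi_1(V)$, whereas a leaf of $e^*\F$ is merely an immersed analytic submanifold, generically non-algebraic; its Zariski closure has a much larger fundamental group to which your solvability argument does not transfer. In the generic situation the stabilizer $\Gamma_{y_0}$ is trivial, the equivariance of $\widetilde{\sigma_i}$ gives no constraint at all, and you are reduced to showing that every holomorphic map $\C^r\to\widetilde{{\rm Sh}}_i$ is constant --- i.e.\ to Brody/Kobayashi hyperbolicity of the Shafarevich variety, which Zuo's theorem does \emph{not} provide (general type only gives measure hyperbolicity, and general-type varieties may well contain rational or elliptic curves). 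Second, the step you flag as the ``main obstacle'' --- upgrading virtual solvability of $e^*\rho_i(\Gamma_{y_0})$ to finiteness --- is indeed not available: Corollary \ref{C:finitemonodromy} rests on Proposition \ref{P:torsion}, which is a statement about the canonical bundle of a distribution on the ambient non-uniruled projective manifold, and there is no analogue for the restriction of the local system to a single (possibly non-compact, non-algebraic) leaf. I recommend abandoning the lifting/equivariance scheme and arguing by contradiction via measure hyperbolicity as the paper does.
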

\begin{proof}
 Assume that the statement of this proposition is false. We use here the splitting $\tilde X={\mathbb C }^r\times Y$ given by theorem \ref{TI:A}. There exists an euclidean subspace ${\mathbb C}^p,\ 1\leq p\leq r$ of the factor ${\mathbb C}^r$ and a local analytic connected subspace $Z_Y$ of $Y$ such that the natural  meromorphic map ${\mathbb C }^p\times Z_Y$ induced by the covering projection $\tilde X\rightarrow X^e$  and ${\rm sh}_{e^*\!\rho_i}$ is a local biholomorphism near some point; this is absurd because ${\rm Sh} _{e^*\!\rho_i}(X^e)$ is measure hyperbolic (being of general type) whereas ${\mathbb C }^p\times Z_Y$ is not. See \cite[Chapter 7]{kob}  for the related properties of  hyperbolicity.
\end{proof}

\begin{prop}
The foliation $\F$ is tangent to the fibers of ${\rm sh}_{\!\rho}$.
\end{prop}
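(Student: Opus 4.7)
My plan is to deduce the tangency of $\F$ to ${\rm sh}_\rho$ from the tangency already established for each ${\rm sh}_{\rho_i}$ in Proposition \ref{Ftangent}, by showing that under the standing hypotheses the Zariski closure $G$ of $\rho(\pi_1(X))$ is in fact semisimple. Once this is achieved, $G$ is isogenous to the product of its almost simple factors $G_i$, so a subvariety has finite $\rho$-image in $G$ if and only if it has finite $\rho_i$-image in each $G_i$. It follows that ${\rm sh}_\rho$ has the same fibers, up to birational equivalence, as the joint map $({\rm sh}_{\rho_1},\ldots,{\rm sh}_{\rho_p})$, and the desired tangency is immediate.

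To prove that $G$ is semisimple, I would first apply Lemma \ref{L:polystable} to decompose $T\F=\bigoplus_j T\F_j$ into stable flat summands with $c_1(T\F_j)=0$, so that $\rho$ splits as a direct sum of irreducible unitary subrepresentations $\rho_j\colon\pi_1(X)\to U(r_j)$. Proposition \ref{P:torsion} applied to each distribution $\F_j$ ensures that $\det T\F_j$ is a torsion line bundle, so the character $\det\rho_j$ has finite image. Since we are in the regime where $G$ is connected and $\rho(\pi_1(X))$ is Zariski dense in $G$, the regular morphism $\det\rho_j\colon G\to\C^\ast$ has connected finite image in $\C^\ast$ and hence is the trivial character of $G$.

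The unitarity of $\rho$ makes $G$ reductive, so it decomposes as $R\cdot[G,G]$ with $R=Z(G)^0$ a central torus. By Schur's lemma $R$ acts on each irreducible summand $V_j\cong\C^{r_j}$ through a scalar character $\mu_j\colon R\to\C^\ast$, and the triviality of $\det\rho_j$ on $G$ yields $\mu_j^{r_j}=1$ on the connected group $R$, forcing $\mu_j$ itself to be trivial. Thus $R$ acts trivially on $V=\bigoplus_j V_j$, and since $G$ embeds faithfully in $GL(V)$ one concludes $R=\{1\}$, so $G$ is semisimple and the previous paragraph delivers the conclusion. The delicate point is this combined Schur-and-Zariski-density argument eliminating the central torus: without it the solvable radical would contribute a genuinely torus-valued piece of $\rho$, whose Shafarevich variety is not of general type and therefore cannot be controlled by the hyperbolicity argument underlying Proposition \ref{Ftangent}.
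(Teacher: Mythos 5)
Your argument is correct, but it takes a genuinely different route from the paper's. The paper does not try to show that the radical $R$ of $G$ vanishes: it keeps the decomposition $G=R\rtimes S$, observes via Proposition \ref{Ftangent} that $\F$ is tangent to the fibers of ${\rm sh}_{\rho_S}$ for the induced representation $\rho_S:\pi_1(X)\to S\simeq G/R$, and then identifies ${\rm sh}_{\rho}$ with ${\rm sh}_{\rho_S}$ by restricting $\rho$ to a general fiber of ${\rm sh}_{\rho_S}$: there the image of $\rho$ is virtually solvable, hence finite by Corollary \ref{C:finitemonodromy}. You instead prove the stronger structural fact that $G$ is semisimple, by combining the decomposition of Lemma \ref{L:polystable} into stable (hence irreducible unitary) flat summands with Proposition \ref{P:torsion} and a Schur/Zariski-density argument killing the central torus; the identification of ${\rm sh}_{\rho}$ with the joint map of the ${\rm sh}_{\rho_i}$ is then immediate from the isogeny $G\to\prod_i G_i$. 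The two proofs rest on exactly the same two inputs (unitarity of $\rho$ and torsion of the determinant bundles), just packaged at different levels: the paper works with restrictions of $\rho$ to subvarieties, you work once and for all with the algebraic group $G$. Your version buys a cleaner statement ($R=\{1\}$, so the case distinction $G=R\rtimes S$ is vacuous) and sidesteps the need to apply Corollary \ref{C:finitemonodromy} to the restriction of $\rho$ to a fiber, which in the paper implicitly requires knowing that the determinant characters remain torsion after restriction; the paper's version is shorter given that corollary in hand. Two small points to make explicit in your write-up: the character $g\mapsto\det(g|_{V_j})$ is well defined on $G$ because the stabilizer of the subspace $V_j$ is Zariski closed and contains $\rho(\pi_1(X))$, so $G$ preserves each $V_j$ and acts irreducibly on it; and the passage from tangency to each ${\rm sh}_{\rho_i}$ to tangency to the joint map should be phrased at the level of a common finite \'etale cover, since the covers $X^e$ furnished by Theorem \ref{T:zuo} may depend on $i$.
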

\begin{proof}
Denote by $\rho_S$ the representation $\pi_1(M)\rightarrow S\simeq G/R$ induced by $\rho$. Using Proposition \ref{Ftangent}, one can see that $\F$ is   tangent to the fibers of ${\rm sh}_{\!\rho_S}$. Let $U$ be a Zariski open subset of $X$ such that ${\rm sh}_{\!\rho}$ is a smooth proper fibration on $U$ and pick a fiber   $F_\rho$ of ${{\rm sh}_{\!\rho}}_{|U}$. The restriction of $\rho$ to $F_\rho$ takes values into a virtually solvable group. The image of $\rho_{|F_\rho}$ is then finite by corollary \ref{C:finitemonodromy}. Therefore  ${\rm sh}_{\!\rho}$ and  ${\rm sh}_{\!\rho_S}$ coincide.
\end{proof}

\subsection{Proof of Theorem \ref{TI:B}}
After replacing $X$ by $X^e$  we can choose an open Zariski subset $U$ of $X$ such that  $\Phi_U:={{\rm sh}_{\!\rho}}_{|U}$ is a smooth proper fibration onto its image $V$ with the additional property that  $h^0(TF)$ is the same for every fiber $F$  (semi-continuity). In particular, one has $h^0(T F)\geq h^0(T\F_{|F})$.
Since $X$ in non-uniruled the same holds true for every fiber $F$ of $\Phi_U$ by a result of Fujiki \cite{Fujiki} (stability of uniruledness), and we can apply Theorem \ref{T:lieb}
to deduce that the fibers are foliated by Abelian varieties.

If $i: U \to X$ denotes the inclusion and $TU / V$ the relative tangent sheaf of the fibration then
$i_* {{\rm sh}_{\!\rho}}^* {{\rm sh}_{\!\rho}}_*   T U  / V$ maps to a subsheaf of $TX$ which after saturation becomes the tangent sheaf of  a foliation $\mathcal A$. The general leaves of $\mathcal A$ are Abelian varieties contained in  the fibers   of ${{\rm sh}_{\!\rho}}$, and containing $\mathcal F$ as a sublinear foliation. We consider the map to the Hilbert scheme which associates to a point $x \in X$ the point in ${\rm Hilb}$ corresponding to the leaf of
$\mathcal A$ through $x$, for details see  \cite{GM}. As $\mathcal A$ is smooth on $U$ this morphism will give rise to the sought meromorphic fibration. \qed

\begin{remark}
 This description shows that the representation $\rho$ arises from a variation of polarized  Hodge structures, hence takes values in a number field.
\end{remark}





\section{Infinite monodromy}\label{S:Faltings}

This Section is devoted to presenting examples of foliations with Chern classes on non-uniruled projective manifolds with infinite monodromy representation.
In most of it  we follow very closely the presentation of \cite[Section 5]{Faltings}.

\subsection{Quaternion algebras}

Let $K$ be a  field of characteristic zero. Let $A$ and $B$  be two elements
of $K$ and denote by  $D=D(A,B)$ the associated quaternion algebra. Concretely,
$D$ is the non-commutative  $K$-algebra with underlying $K$-vector space
generated by $1,i,j,k$ and subject to relations:
\[
i^2=A, j^2=B, ij=k,  \text{ and } ji=-k .
\]
It follows in particular that $jk=i, ki=j,$ and $k^2 = -AB$.

The algebra $D$ carries a canonical involution which takes $\alpha=a + bi + cj + dk$ to
$ \alpha^* = a - bi - cj - dk$, and consequently $N(\alpha) = \alpha \cdot {\alpha}^*$ (the norm of $\alpha$) and $T(\alpha) = \alpha +  \alpha^*$ (the trace of $\alpha$) belong to $K$.

Notice that $N(a + bi + cj + dk) = a^2 - A b^2 - B c^2 +AB d^2$ and therefore is a quadratic form on $D$. If we extend the scalars to $\overline K$, the algebraic closure of $K$, then the result is   isomorphic to the algebra of $2\times 2$ matrices with coefficients
in $\overline K$, i.e.
$D \otimes_K \overline K  \simeq M_2(\overline K)$. Moreover, such isomorphism can be chosen in such a way that the norm and the trace are respectively identified with the determinant and the trace of matrices.

If there exits a non-zero  element in $D$ with zero norm then $D$ is isomorphic to $M_2(K)$, otherwise $D$ is a division algebra with the left inverse of an element $\alpha$ given by $- \alpha^* / N(\alpha)$.

If $K$ is a real  number field and  we choose an embedding  $\sigma: K \to \mathbb R$ then two things can happen: the real quadratic form on $D \otimes_{\sigma(K)} \mathbb R$ induced by $N$ is positive definite and then $D \otimes_{\sigma(K)} \mathbb R$ is isomorphic to the algebra of quaternions $\mathbb H$; or  the real quadratic form is indefinite and  $D \otimes_{\sigma(K)} \mathbb R$ is isomorphic to $M_2(\R)$.

\subsection{Example}
From now on $K$ will be a real quadratic number field and we will choose $A,B \in K$
such that for one of the embeddings  we get $D \otimes_{\sigma_1(K)} \mathbb R \simeq \mathbb H \begin{large}\begin{Large}                                                                                                           \end{Large}                                                                                              \end{large}$ while for the other we get $D \otimes_{\sigma_2(K)} \mathbb R\simeq M_2(\R)$. In other words $D\otimes_{\mathbb Q} \mathbb R \simeq  \mathbb H \times M_2(\R)$. Notice that in this case $D$ is a division $K$-algebra, the non-invertible elements appear only after extension of scalars.

Let $V_{\mathbb Q}$ be the $8$-dimensional $\Q$-vector space underlying $D$, i.e. $V_{\Q}=D$. Of course, $D$ acts on $V$ by left and right multiplication. If $\tau$ is an element of $D$ such that $\tau^* = - \tau$ then it defines a new involution on $D$: $\alpha \mapsto a^{\tau} = \tau  \alpha^* \tau^{-1}$. If we consider the skew-symmetric on $V$ defined by
\[
< \alpha, \beta > = < \alpha,\beta>_{\tau} = \mathrm{tr}_{K/\Q}(T(\alpha \tau  \beta^*))
\]
then $<\alpha \gamma, \beta > = < \alpha, \beta \gamma^{\tau}>$. Moreover the action by left mulplication of the algebraic group $G$ with rational points
\[
G(\Q)= \{ \alpha \in D \, | \, N(a) =1 \},
\]
preserves $< , >$.

The group $G(\R)$ of real points of  $G$ is isomorphic to $SU(2,\C) \times SL(2,\R)$ and
it acts on $V_{\R}=V\otimes_\Q \R\simeq \C^2 \oplus ( \R^2 \otimes \R^2)$ in such way that $SU(2,\C)$ acts on the first summand by its natural representation and $SL(2,\R)$ acts on the the first factor of the tensor product also by its natural representation.
The bilinear form $< , >$ decomposes as
\[
< ,> = < , > _1 \oplus <, >_2 \otimes <,>_3,
\]
where $<,>_1$ in suitable coordinates is a real multiple of the skew-form $(z,w)\mapsto Im(\overline z_1 w_1 + \overline z_2 w_2 )$ on $\C^2$, and $<,>_2$ is a real multiple of the skew-form $(x,y)\mapsto x_1y_2 - x_2y_1$ on $\R^2$. We will choose $\sigma$ in such a way that $<,>_1$ is a positive multiple of the skew-form above and that $< ,>_3$ is positive definite.

If $W_0$ is the kernel of the natural multiplication morphism $\C^2 \otimes_{\R} \C\to \C$ ($W_0 = \{ z\otimes 1 + i z \otimes i | z \in \C^2\}$) and $W_+ = \{ z \otimes 1 - iz \otimes i | z \in \C^2\}$  then
\[
V_{\mathbb C} = V_{\Q} \otimes_{\Q}\C =V_{\R}\otimes_{\R} \C= (W_+ \oplus W_0) \oplus (\C^2 \otimes \R^2)\,.
\]
Notice that $W_0$ and $W_+$ are orthogonal with respect to the complexification of $< ,>_1$ and that they are interchanged by complex conjugation.
If $z \in W_+$ then $-i^{-1} < z, \overline z>_1> 0$. Let now $z=(z_1,z_2) \in \C^2=\R^ 2\otimes_{\R} \C$ (the first factor of the tensor product  in the decomposition above) be such that $-( i )^{-1} < z,\overline z>_2>0$ and consider the line complex line $L \subset \C^ 2$ determined by it. Therefore $<,>$ defines    a weight one polarized Hodge structure on $V$ with
\[
V_{\C}^{1,0} = (W_+ \oplus 0) \oplus (L \otimes \R^2)  .
\]
The stabilizer in $G(\R)$ of this Hodge structure is a maximal compact subgroup $M \simeq SU(2,\C) \times U(1,\C)$, and $G(\R)$ acts transitively on the set of polarized Hodge structures of $V$ with polarization given by $<,>$. Thus they are parametrized by $G(\R)/ M$ which is isomorphic to the Poincar\'e disc $\mathbb D$.

If we choose a lattice $V_{\Z} \subset V_{\Q}$ over which $<,>$ takes integral values and of determinant one then we get a family of $4$-dimensional simple Abelian varieties parametrized by the Poincar\' e disk.  If $\Gamma \subset G(\Q)$ is a torsion-free arithmetic subgroup which stabilizes $V_{\Z}$ (and, as $\Gamma$ cannot be contained in $M$, does not preserve the Hodge structure) then we obtain a family $f:  X \to B$ over the compact Riemann surface
$B = \Gamma \backslash \mathbb D = \Gamma \backslash  G(\R) / M$. The subspace $(W_+ \oplus 0) \oplus 0 \subset V_{\C}$  gives rise to a rank two local system $\mathbb W$ over $B$ contained in $(R^1 f_* \C)^ {1,0}$,   and with  monodromy given by the image of  $\Gamma \subset G(\Q) \subset G(\R) \simeq SU(2,\C)\times SL(2,\R)$ under the projection to $SU(2,\C)$. Therefore $\mathbb W \otimes  \mathcal O_ B$ can be seen as a subsheaf of $f_*\Omega^1_{X/B}$.  Koll\'ar's decomposition theorem (\cite{K2}),
implies that $f_*\Omega^1_{X/B}$ splits holomorphically as a direct sum of subbundles
    $$f_*\Omega^1_{X/B}=(\mathbb W \otimes  \mathcal O_ B)\oplus \mathcal L$$ with $\mathcal L$ ample on $B$ (in our case, $\mathcal L$ corresponds to the factor $L\otimes{\R}^2$). It follows the existence of  a canonical dual splitting
 $$f_*T_{X/B}={(\mathbb W \otimes  \mathcal O_ B)}^\perp\oplus {\mathcal L}^\perp \, .$$
The  factor  $f^*({\L}^\perp )$ determines a rank two subsheaf of $T_{X/B} \subset TX$. The corresponding foliation has trivial  Chern classes and is tangent to the fibers of $f$. Over a point $x \in B$ the Abelian fourfold $f^ {-1}(x)$ is given by the dual of
\[
    \frac{(W_+ \oplus 0) \oplus (L_x \otimes \R^ 2)}{\Lambda} \, ,
\]
 where $\Lambda$ is the projection
of $V_{\Z}$ to the first factor of the decomposition $V_{\C} =  V^{1,0}_{\mathbb C,x} \oplus \overline{ V^{1,0}_{\mathbb C,x} }$. The foliation on $f^ {-1}(x)$  is the linear foliation  determined by the kernel of $L_x \otimes \R^ 2\subset V^{1,0}_{\mathbb C,x}$.

This example shows that the unitary representation attached to the tangent sheaf of a foliation with vanishing Chern classes  can be indeed infinite. It also shows that the hypothesis on the codimension of the foliation in \cite[Theorem 1.5]{TouTou} is necessary, contrarily to what
was conjectured there.

\section{Codimension two}

In this section we will prove  Theorem \ref{TI:C} by analyzing the variation of weight one polarized Hodge structures attached
to the meromorphic fibration given by Theorem \ref{TI:B}.

\subsection{Settling the notation}

Let  $\F$ be  a codimension two foliation with $c_1(T\F)=c_2(T\F)=0$ on a non-uniruled projective manifold $X$. We will denote by $n$ the dimension of $X$ and by $r$ the dimension of $\F$.
Let
$$\rho:\pi_1(X)\rightarrow U(r,\C)$$
be the representation attached to $T\F$, and recall that there exists a transverse foliation ${\F}^{\perp}$ defined by a closed $1$-form with values in the flat vector bundle $T\F$.

Then, by Theorem \ref{TI:B},  we can assume that $\F$ is tangent to the fibers of a meromorphic fibration $\Phi:X\dashrightarrow Y$  with general  fiber being an Abelian variety. Thus  there exists open subsets $U \subset X$ and $V\subset Y$ such that $\Phi_{|U}$
is a proper smooth fibration with connected fibers
 over $V\subset Y$.
With the additional data of an ample divisor on $X$, this defines a weight one polarized variation of Hodge structure over $V$.
Let $F$ be a fiber of $\Phi$ and denote by $q_F$ the polarization form on $F$.
Call $\psi$ the natural representation
$$\psi: \pi_1(V)\rightarrow \mbox{Aut}\ H^1(F,\C)$$ associated to the local system $R^1\Phi_*{\C}_U$ and consider $H^1(F,\C)$ as a $\pi_1(V)$-module.
Let $G$ be the image of $\pi_1(V)$ by $\psi$.

 Let us define $N^{1,0}$ as the maximal $\pi_1(V)$-submodule of $H^1(F,\C)$  contained in $H^{1,0}(F)$. Because $\pi_1(V)$ acts isometrically with respect to the scalar product
$$N^{1,0}\times N^{1,0} \ni (\omega_1,\omega_2) \mapsto {q_F}(\omega_1,\overline{\omega_2}),$$ 
it is indeed a unitary submodule.
The orthogonal (non necessarily unitary) submodule to $N=N^{1,0}\oplus \overline {N^{1,0}}$  with respect to $q_F$ is of the form
   $$B=B^{1,0}\oplus\overline{B^{1,0}}$$ where $B^{1,0}$ is a complementary subspace of $N^{1,0}$ in $H^{1,0}$.

\subsection{Proof of Theorem \ref{TI:C}}
 Notice that $H^0(F, i^*{(N{\F}^\perp)}^*)$ is a submodule of $N^{1,0}$ (here, $i$ denotes the inclusion of $F$ into $V$) and that the image of the induced action of $\pi_1(V)$ is precisely $\mbox{Im}\ \rho$.
Assume for a moment that $N^{1,0}=H^{1,0}(F,\C)$, then $H^1(F,\C)$ is a unitary module. As $G$ preserves the integer lattice $H^1(F,\Z)$, it is a finite group by a theorem of Kronecker.

From now on, we are going to deal with $\mbox{Im}\ \rho$ infinite, $\mbox{codim}\ \F=2$ and aim at a contradiction.
By the previous observations, we obtain that $N^{1,0}= H^0(F, i^*{(N{\F}^\perp)}^*)$ and that $B= B^{1,0} \oplus \overline{B^{1,0}}$  has dimension $2$.
The group of automorphisms $\mbox{Gal}(\C/\Q)$ acts  naturally on the set $\mathcal M$ of $\pi_1(V)$-submodules of $H^1(F,\C)$
since the action of $\pi_1(V)$ preserves $H^1(F,\Q)$ and therefore is defined over $\Q$. For $M\in\mathcal M$ we will denote $M^\sigma$ its conjugate by $\sigma\in\mbox{Gal}(\C/\Q)$.
There exists finitely many irreducible unitary submodules $N_1,....,N_l$ such that
       $$N^{1,0}=\bigoplus_i N_i$$
corresponding to a splitting of $\rho=\rho_1\oplus...\oplus \rho_l$.
Recall that $\mbox{Im}\ \rho_i$ is finite whenever $\mbox{dim}_\C\ N_i=1$.
Hence, there exists some indices $i$ such that $\mbox{dim}_C\ N_i\geq 2$.
Let us call $p_N$, respectively $p_B$, the projection of $H^1(F,\C)$ to $N$, resp. to $B$.
We will distinguish three cases:
\begin{enumerate}
 \item For every $\sigma\in\mbox{Gal}(\C/\Q)$, ${(N^{1,0})}^\sigma\subset N$.
This means that the submodule $M$ of $N$ generated by   ${(N^{1,0})}^\sigma$ with $\sigma$ ranging in $\mbox{Gal}(\C/\Q)$ is defined over $\Q$, i.e. $M=W_\Q\otimes\C$ where $W_\Q$ is a subspace of $H^1(F,\Q)$. Thus it can be defined over $\Z$: $M=W_\Z\otimes\C$ with $W_\Z=W_\Q\cap H^1(F,\Z)$. Moreover, $M$ is a unitary module (being a submodule of the unitary module $N$) containing $N^{1,0}$. This implies that $\mbox{Im}\ \rho$ is finite,  a contradiction.
\item  There exists an irreducible factor $N_{i_0}$ of dimension at least $2$ and $\sigma\in\mbox{Gal}(\C/\Q)$  such that $p_N({(N_{i_0})}^\sigma)$ and $p_B({(N_{i_0})}^\sigma)$ are not $\{0\}$. By irreducibility, these images are irreducible submodules of $N$ and $B$ both isomorphic to ${(N_{i_0})}^\sigma$. In particular, the second projection is the whole $B$. As $p_N({(N_{i_0})}^\sigma)$ is unitary, the same holds true for $B$. One can then conclude that $G$ lies in a unitary group and again that $G$ and $\mbox{Im}\ \rho$ are finite, absurd.
\item  There exists  an irreducible factor $N_{i_0}$, $\mbox{dim}\ N_{i_0}\geq 2$ and $\sigma\in\mbox{Gal}(\C/\Q)$  such that ${(N_{i_0})}^\sigma=B$.
Since $q_F$ is defined over $\Q$, we have  that $q_F({\omega_1}^\sigma,{\omega_2}^\sigma)=\sigma (q_F(\omega_1,\omega_2))$. Here, the contradiction follows from the fact that $q_F$ is trivial on $N_{i_0}\times N_{i_0}$ whereas it is not on ${(N_{i_0})}^\sigma\times {(N_{i_0})}^\sigma=B\times B$.
\end{enumerate}
Since at least one of the three possibilities above always holds true, the Theorem follows. \qed

\section{Arithmetic}

In this section we analyze the behavior of foliations with vanishing Chern classes under reduction modulo primes. The foliations, varieties, and sheaves defined over a field of characteristic $p>0$ will be marked with a subscript $p$, or $\p$.  For more details about the reduction modulo primes of foliations see \cite{ets,croco3,SB} and references therein.

\subsection{Power map}

Let $k$ be an algebraically closed field of characteristic $p>0$, and let $S_p = \mathrm{Spec}(k)$.
In this section $X_p$ will be a smooth irreducible projective $S_p$-scheme.

A foliation $\mathcal F_p$ on $X_p$ is determined by a coherent subsheaf $T\mathcal F_p$ of $TX_p$ which is involutive
and has torsion free cokernel $TX_p/T\mathcal F_p$. Unlike in characteristic zero, where Frobenius integrability theorem
implies that a foliation   at a formal neighborhood of a general point is nothing but a fibration, a foliation does not need
to have a leaf through a general point. This is the case only when $T\mathcal F_p$ is not just involutive but also $p$-closed, i.e.,  closed under
$p$-th powers.

The $p$-closedness of $T\mathcal F_p$ is equivalent to the vanishing of the  morphism
of $\mathcal O_{X_p}$-modules
\begin{align*}
\times^p : Frob^* T \mathcal F_p & \longrightarrow \frac{TX_p}{T\mathcal F_p} \\
 a \otimes v &\longmapsto a v^p \, .
\end{align*}
Here $Frob : X_p \to X_p $ denotes the absolute Frobenius morphism. Thus $Frob$ is the identity  over the topological space, but the morphism
at the level of structural sheaves given by
\begin{align*}
 Frob^{ \sharp} : \mathcal O_{X_p} &\longrightarrow \mathcal O_{X_p} \\
f &\longmapsto f^p \, .
\end{align*}
Therefore $Frob^* T \mathcal F_p = \mathcal O_{X_p} \otimes_{Frob^{-1} \mathcal O_{X_p}} Frob^{-1} T\mathcal F_p \simeq  \mathcal O_{X_p} \otimes_{Frob^{-1} \mathcal O_{X_p}}  T\mathcal F_p   $ is isomorphic to $T \mathcal F_p$ as a sheaf of abelian groups, but not as a sheaf of $\mathcal O_{X_p}$-modules: in it $a^p\otimes v = 1 \otimes a v$.

\subsection{Canonical connection}

If $\mathcal E_p$ is an arbitrary coherent sheaf over $X$ then $Frob^* \mathcal E_p$ comes equipped with a canonical connection $\nabla : Frob^* \mathcal E_p \to \Omega^1_{X_p} \otimes Frob^* \mathcal E_p$, defined
as $\nabla(f \otimes v) = df \otimes v$. Clearly the sheaf of flat sections is a sheaf of ${\mathcal O_{X_p}}^p$-modules and generates $Frob^* \mathcal E_p$
as a sheaf of $\mathcal O_{X_p}$-modules.

If $\mathcal G_p$ is a subsheaf of $Frob^* \mathcal E_p$ then it is natural to inquire if there exists a $\mathcal H_p \subset \mathcal E_p$ such that $\mathcal G_p = Frob^* \mathcal H_p$.
 Such $\mathcal H_p$ exists if and only if the $\mathcal O_{X_p}$-morphism
\begin{align*}
\mathcal G_p &\longrightarrow \Omega^1_{X_p} \otimes \frac{Frob^* \mathcal E_p }{\mathcal G_p} \\
\sigma &\longmapsto \nabla(\sigma) \mod \Omega^1_{X_p} \otimes \mathcal G_p \, .
\end{align*}
induced by the canonical connection $\nabla$ is identically zero, cf. \cite[Section 2]{Langer}. When $\mathcal H_p$ does not exist the morphism above induces a non-trivial
$\mathcal O_{X_p}$-morphism
\begin{align*}
\Phi_{\mathcal G_p} : T X_p &\longrightarrow \Hom\left( \mathcal G_p, \frac{Frob^* \mathcal E_p }{\mathcal G_p} \right) \\
v &\longmapsto \left( \sum f_i \otimes \sigma_i \mapsto \sum df_i (v) \sigma_i  \mod \mathcal G_p\right)  .
\end{align*}

\begin{prop} \label{P:automorfismos}
Let $\mathcal F_p$ be a foliation on $X_p$. If  $\mathcal G_p \subset Frob^* T \mathcal F_p$ is the kernel of $\times^p$ then every germ of infinitesimal automorphism of $\mathcal F_p$ is
 contained in $\ker \Phi_{\mathcal G_p}$. In particular, $\ker \Phi_{\mathcal G_p}$ contains the smallest $p$-closed subsheaf of $TX_p$ containing $T\mathcal F_p$.
\end{prop}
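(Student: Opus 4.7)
The plan is to reduce the proposition to a single restricted-Lie-algebra identity in characteristic $p$, together with a $p$-curvature computation. A germ of infinitesimal automorphism of $\mathcal F_p$ means a local vector field $w$ satisfying $[w,T\mathcal F_p]\subset T\mathcal F_p$, and unwinding the definitions of $\Phi_{\mathcal G_p}$ and of $\mathcal G_p=\ker(\times^p)$, the condition $\Phi_{\mathcal G_p}(w)=0$ amounts to the following: for every local section $\sigma=\sum f_i\otimes v_i$ of $Frob^*T\mathcal F_p$ with $\sum f_iv_i^p\in T\mathcal F_p$, one must have $\sum w(f_i)\,v_i^p\in T\mathcal F_p$, where $w(f_i)=df_i(w)$.

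To verify this, I take the ordinary Lie bracket of $w$ with the vector field $\sum f_iv_i^p\in T\mathcal F_p$ and apply Leibniz:
\[
\bigl[w,\sum f_iv_i^p\bigr]=\sum w(f_i)\,v_i^p+\sum f_i\,[w,v_i^p].
\]
The left-hand side is in $T\mathcal F_p$ because $w$ normalizes $T\mathcal F_p$ and $\sum f_iv_i^p\in T\mathcal F_p$. The heart of the argument is to show that each $[w,v_i^p]$ also lies in $T\mathcal F_p$. For this I invoke the restricted-Lie-algebra identity $[v^p,u]=(\ad v)^p(u)$, valid in the Lie algebra of derivations of $\mathcal O_{X_p}$ with the $p$-operation given by $p$-fold composition. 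Since $v_i\in T\mathcal F_p$ and $[w,v_i]\in T\mathcal F_p$ by hypothesis, a trivial induction using the involutivity of $T\mathcal F_p$ yields $(\ad v_i)^k(w)\in T\mathcal F_p$ for every $k\geq 1$; taking $k=p$ gives $[v_i^p,w]\in T\mathcal F_p$. Subtracting in the Leibniz identity then produces the desired $\sum w(f_i)\,v_i^p\in T\mathcal F_p$, proving the first assertion.

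For the ``In particular'' statement, note that $\ker\Phi_{\mathcal G_p}$ is automatically an $\mathcal O_{X_p}$-coherent subsheaf of $TX_p$ and contains $T\mathcal F_p$ (every local section of $T\mathcal F_p$ is an infinitesimal automorphism by involutivity). It remains to show that $\ker\Phi_{\mathcal G_p}$ is stable under $w\mapsto w^p$. This follows from the vanishing of the $p$-curvature of the canonical connection: from $\nabla(f\otimes v)=df\otimes v$ one computes $\nabla_D^k(f\otimes v)=D^k(f)\otimes v$ for all $k$, hence $\nabla_{w^p}=\nabla_w^p$, so if $\nabla_w$ preserves $\mathcal G_p$ then so does $\nabla_{w^p}$. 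Therefore $\ker\Phi_{\mathcal G_p}$ is an $\mathcal O_{X_p}$-coherent, $p$-closed subsheaf containing $T\mathcal F_p$, so it contains the smallest such subsheaf. The only real subtlety I anticipate is book-keeping the identifications inside $Frob^*T\mathcal F_p$ (in particular the twisted module structure where $f^p\otimes v=1\otimes fv$) when translating between $\sigma\in\mathcal G_p$ and its image $\sum f_iv_i^p\in T\mathcal F_p$; once these are set up carefully, the algebraic input reduces to the two identities above.
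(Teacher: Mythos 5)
Your proof of the main assertion is correct and follows essentially the same route as the paper: expand $[w,\sum f_i v_i^p]$ by Leibniz, observe that the left-hand side lies in $T\mathcal F_p$ because $w$ normalizes $T\mathcal F_p$, and kill the terms $f_i[w,v_i^p]$ via Jacobson's restricted-Lie-algebra identity $\ad_{v^p}=(\ad_v)^p$ together with the involutivity of $T\mathcal F_p$. The only (mild) divergence is in the ``in particular'' statement: the paper deduces it by applying the first assertion to the iterated powers $v^{p^n}$, $v$ a local section of $T\mathcal F_p$, noting that these are infinitesimal automorphisms of $\mathcal F_p$ and generate the $p$-closure of $T\mathcal F_p$; you instead identify $\ker\Phi_{\mathcal G_p}$ with the stabilizer $\{w : \nabla_w\mathcal G_p\subset\mathcal G_p\}$ and show it is itself closed under $p$-th powers using the vanishing of the $p$-curvature of the canonical connection ($\nabla_{w^p}=\nabla_w^p$). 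Both arguments are valid; yours has the small advantage of not re-invoking the first assertion for the vector fields $v^{p^n}$, while the paper's is more economical in that it needs no $p$-curvature computation.
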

\begin{proof}
Let $v$ be a germ of infinitesimal automorphism of  $\mathcal F_p$, i.e., $v \in TX_p(U)$ and $[v, T\mathcal F_p(U)] \subset T \mathcal F_p(U)$ for some non-empty  open
subset $U \subset X_p$. If $\sum f_i \otimes v_i \in Frob_{X_p}^* T\mathcal F_p (U) $ is an element in the kernel of $\times^p$ then
\[
\sum f_i v_i^p = 0 \mod T \mathcal F_p(U) \, .
\]
Since $v$ is an automorphism we have that $[v,\sum f_i v_i^p] = 0 \mod T \mathcal F_p(U)$. From
$[v,\sum f_i v_i^p] = \sum df_i(v) v_i^p + \sum f_i [v,v_i^p] = \sum df_i(v)v_i^p  - \sum f_i \ad_{v_i^p } (v) \, $  and $\ad_{v_i^p }(v) = (\ad_{v_i})^p (v)$
\cite[eq. (60), page 186]{Jacobson}  we deduce that $\sum f_i \ad_{v_i^p } (v)$ belongs to $T\mathcal F_p (U)$ and consequently
\[
\sum df_i(v) v_i^p = 0 \mod T \mathcal F_p(U) \, .
\]
This last identity  proves the first statement.  For the second statement it suffices to notice $v^{p^n}$ ($n\ge 1$)  are infinitesimal automorphisms of $\mathcal F_p$ for any local section $v \in T\mathcal F_p(U)$ and that these vector fields generate the $p$-closure of $T\mathcal F_p(U)$.
\end{proof}

\subsection{Lifting the $p$-envelope of a foliation}
Up to the end of this Section,  $\p$ will denote a maximal prime in a finitely generated $\Z$-algebra  $R$ with residue field of characteristic $p>0$.

\begin{prop}\label{P:saturation}
Let $\F$ be a semi-stable foliation on a polarized complex projective manifold $(X,H)$ satisfying $\deg(T\F):=\det (T\F) \cdot H^{n-1} =0$. If everything in sight is defined over a finitely generated $\Z$-algebra $R$
then  one of the following assertions hold true:
\begin{enumerate}
\item the foliation $\F_{\mathfrak{p}}$ is $p$-closed for all maximal primes $\p$ in an non-empty open  subset of $Spec(R)$;
\item the foliation $\F$ is tangent to a  foliation $\G$ with $\dim \G > \dim \F$ and $\det (T \G) \cdot H^{n-1} \ge 0$; or
\item  the foliation $\F$ is tangent to a  foliation $\G$ with $\dim X > \dim \G > \dim \F$, $\det (T \G) \cdot H^{n-1} <0$, and  the reduction modulo $\mathfrak p$ of  $T\F$ is not Frobenius  semi-stable for all maximal primes $\p$ in a dense   subset of $Spec(R)$. Moreover, there exists a non-empty open subset $U\subset Spec(R)$ such that for every maximal prime $\mathfrak p \in U$ the foliation $\F_{\p}$ is $p$-closed or $Frob^*T \F_{\p}$ is not semi-stable.
\end{enumerate}
\end{prop}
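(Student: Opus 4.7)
The plan is to assume (1) fails, construct a foliation $\G$ on $X$ over $R$ with $T\F \subsetneq T\G$ by spreading out the $p$-envelopes of the reductions $\F_\p$, and then distinguish cases by the sign of $\delta := \det(T\G) \cdot H^{n-1}$. Let $\Sigma := \{ \p \in \mathrm{Spec}(R) : \F_\p \text{ is not } p\text{-closed}\}$; failure of (1) is precisely the statement that $\Sigma$ is Zariski dense. To avoid notational conflict with the $\G$ of the statement, write $\K_\p := \ker(\times^p : Frob^{*} T\F_\p \to TX_\p/T\F_\p)$ for the sheaf denoted $\mathcal G_p$ in Proposition \ref{P:automorfismos}.

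For each $\p \in \Sigma$, Proposition \ref{P:automorfismos} produces a saturated involutive subsheaf $\ker \Phi_{\K_\p} \supsetneq T\F_\p$ of $TX_\p$ containing the $p$-envelope $T\widetilde{\F}_\p$. The generic ranks of these subsheaves lie in $\{\dim \F + 1, \ldots, n\}$, so their Hilbert polynomials with respect to $H$ take only finitely many values; passing to a still-dense $\Sigma' \subset \Sigma$, we assume a single Hilbert polynomial $P$. In the projective $R$-scheme $\mathrm{Quot}^{P}_{TX/R}$, the locus of saturated involutive subsheaves containing $T\F$ is closed; its image in $\mathrm{Spec}(R)$ is closed by properness and contains the dense set $\Sigma'$; after localizing $R$, a section yields a foliation $\G$ over $R$ with $T\F \subsetneq T\G$, $\dim \G > \dim \F$, whose reduction modulo $\p$ contains $T\widetilde{\F}_\p$ for every $\p \in \Sigma'$.

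If $\delta \geq 0$ we land in case (2). Otherwise $\delta < 0$, and we aim at case (3). Should the lifted $\G$ happen to satisfy $\dim \G = \dim X$, we redo the spreading-out with the smaller sheaf $T\F_\p^{(1)}$ (the saturation of $T\F_\p$ together with the image of $\times^p$, already involutive since $T\F$ is), obtaining a proper intermediate foliation and hence $\dim X > \dim \G > \dim \F$. For Frobenius instability, consider
\[
0 \longrightarrow \K_\p \longrightarrow Frob^{*} T\F_\p \xrightarrow{\times^{p}} TX_\p/T\F_\p,
\]
whose image $Q_\p$ lies inside $T\widetilde{\F}_\p/T\F_\p \subseteq T\G_\p/T\F_\p$. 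Since $\deg(T\G_\p/T\F_\p) = \delta < 0$ and $Q_\p$ generates $T\G_\p/T\F_\p$ after saturation (Lie brackets contribute nothing new modulo $T\F_\p$ by involutivity), a slope argument yields $\deg Q_\p \leq \delta < 0$, whence $\deg \K_\p = -\deg Q_\p > 0$: the sheaf $Frob^{*} T\F_\p$ is destabilized. The ``moreover'' dichotomy of (3) then holds on the non-empty open $U \subset \mathrm{Spec}(R)$ over which the spreading-out is flat.

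The main obstacle I anticipate is the final slope computation: establishing $\deg Q_\p < 0$ strictly requires genuine control on the saturation of $Q_\p$ inside $T\G_\p/T\F_\p$ and careful bookkeeping of the Lie-bracket versus $p$-power contributions to the $p$-envelope. A secondary difficulty, handled above via the $T\F_\p^{(1)}$-variant, is ensuring the lifted $\G$ has dimension strictly less than $\dim X$ when $\delta$ is negative.
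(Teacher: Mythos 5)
Your overall architecture matches the paper's: spread out the characteristic-$p$ subsheaf generated by $T\F_\p$ and the image of $\times^p$ to a foliation $\G$ in characteristic zero, split on the sign of $\deg T\G$, and destabilize $Frob^{*}T\F_\p$ via the kernel of $\times^p$ when that sign is negative; your closing slope computation ($\deg Q_\p\le\delta<0$, hence $\deg\K_\p>0$) is exactly the paper's. But the spreading-out step, which is the heart of the proof, is not justified. Fixing the generic rank of the subsheaves of $TX_\p$ does \emph{not} bound their Hilbert polynomials: saturated subsheaves of a fixed sheaf of a given rank can have degree tending to $-\infty$ (already line subbundles of $\mathcal O^{\oplus 2}$ on $\P^1$ do), so ``finitely many Hilbert polynomials'' and the ensuing Quot-scheme argument collapse. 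The missing ingredient is a uniform lower bound, independent of $\p$, on $\deg(\mathrm{Im}(\times^p))$; the paper gets it from Shepherd-Barron's bound \cite[Corollary $2^p$]{SB} ($\deg \mathcal I_\p\ge -C$ when $Frob^{*}T\F_\p$ fails to be semi-stable, $\deg\mathcal I_\p\ge 0$ when it is semi-stable), and only then does Grothendieck's boundedness \cite[Corollaire 2.3]{Grothendieck} produce the lift $\G$. Relatedly, you should spread out the saturation $\E_\p$ of $T\F_\p+\mathrm{Im}(\times^p)$ rather than $\ker\Phi_{\K_\p}$, which Proposition \ref{P:automorfismos} does not assert to be involutive or saturated, and which need not coincide with the saturation of $Q_\p$ --- yet your final slope argument requires $Q_\p$ to saturate to $T\G_\p/T\F_\p$.

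The second gap is the strict inequality $\dim\G<\dim X$ in case (3). Your fallback --- ``redo the spreading-out with $T\F_\p^{(1)}$'' --- begs the question: nothing prevents $T\F_\p^{(1)}$ (which is precisely the paper's $\E_\p$) from equaling $TX_\p$ for a dense set of primes, in which case the new $\G$ is again all of $TX$. The paper excludes this with an argument your proposal does not contain: if $\E_\p=TX_\p$ then $\ker\Phi_{\K_\p}\supseteq\E_\p=TX_\p$, i.e.\ $\Phi_{\K_\p}\equiv 0$, so by the descent criterion $\K_\p$ is a Frobenius pull-back $Frob^{*}\H_\p$ of a subsheaf $\H_\p\subset T\F_\p$; since $\deg T\G<0$ forces $\deg\K_\p=-\deg\mathcal I_\p>0$, one gets $\deg\H_\p>0$, contradicting the semi-stability of $T\F_\p$ (valid for almost all $\p$ by the hypothesis on $\F$ and openness of semi-stability). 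This descent use of Proposition \ref{P:automorfismos} is the second key idea missing from your argument.
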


In case (2) we do not exclude the possibility of $\G$ being the foliation on $X$ with only one leaf, i.e.  $T \mathcal G = T X$, but this
happens only if $X$ is uniruled (when $KX$ is not pseudo-effective by \cite{BDPP}) or when $X$ has trivial canonical class.

\begin{proof}[{\bf Proof of Proposition \ref{P:saturation}}] Suppose that $\F_\p$  the reduction modulo $\p$ of $\F$ is not $p$-closed. Then the morphism $\times^p: Frob^* T \F_\p \to TX_\p / T \F_\p$ is non-zero. Let $ \mathcal I_\p$ be its image.

If $Frob^* T\F_\p$ is semi-stable then $\deg(\mathcal I_\p)\ge 0$. Otherwise, according to  \cite[Corollary $2^p$]{SB}, there exists a constant $C\ge 0$, independent of $\p$, such that $ \deg(\mathcal I_\p)\ge -C$.

Let $\mathcal E_\p$ be the saturation in $TX_\p$ of the inverse image of $\mathcal I_\p$ under the natural quotient morphism $TX_\p \to TX_\p / T \F_\p$.
Notice that $\mathcal E_\p$ contains $T\F_\p$ and, as $\mathcal I_\p$,  has degree bounded from below  by $-C$. Since $p$-th powers of vector fields tangent to $\F_\p$ give rise to infinitesimal automorphisms of $\F_\p$ it follows that $\E_\p$ is  involutive. Also, the cokernel of inclusion of $\E_\p$ in $TX_\p$ is torsion free and, when $\p$ varies, have degree uniformly bounded from above by $C$. Thus the family of involutive sheaves $\E_\p$ containing $T\F_\p$  belong to a bounded family,  \cite[Corollaire 2.3]{Grothendieck}, and there exists a foliation $\G$ in characteristic zero strictly containing $\F$ with a tangent sheaf that reduces modulo $\p$ to $\mathcal E_p$.

If $Frob^* T\F_\p$ is semi-stable  for  a dense   subset of maximal primes $\mathfrak p \in Spec(R)$ then $\deg(T\G) \ge 0$ and there is nothing else to prove. If instead $Frob^* T\F_\p$
is not semi-stable for  a dense   subset of maximal primes $\mathfrak p \in Spec(R)$  then   $\deg(T\G) < 0$ is not excluded. Aiming at a contradiction, let us  assume  that $T\G$ coincides with $TX$. Then for infinitely many primes
the sheaf  $\E_\p$ defined above coincides with $TX_\p$ and the kernel $\K_\p$ of $\times^p: Frob^* T \F_p \to TX_p / T \F_p$
has positive degree.  Since $\E_\p$ is generated, at a general point of $X_\p$, by $p$-th powers of $T\F_\p$ we can apply  Proposition \ref{P:automorfismos} to deduce that   $\K_\p$ is a Frobenius pull-back of a subsheaf of $T\F_\p$ contradicting the semi-stability of $T\F_\p$.
\end{proof}

\subsection{Proof of Theorem \ref{TI:D}} Let   $\F$ be a foliation on a complex projective manifold $X$ maximal among the foliations with tangent bundle with vanishing first and second Chern classes. At a first moment let us assume that $R$ is contained in a number field.

We will start by excluding case (2) of Proposition \ref{P:saturation}.  Indeed, if $\G$
is the foliation containing $\F$ given by Proposition \ref{P:saturation}
then since $T\G$ is pseudo-effective by \cite{CPT} and the polarization
in  Proposition \ref{P:saturation} is arbitrary, it follows that $c_1(T\G)=0$.
Theorem \ref{T:croco} ensures the existence of a transverse  foliation $\G^{\perp}$, and  $\G$ is tangent to the meromorphic fibration $\pi: X \dashrightarrow S$ given by Theorem \ref{TI:B}. Restricting to an open subset $U\subset X$ where the fibration is proper and  smooth we see that the conormal bundle of $\G^{\perp}$  has an injective natural morphism to   the relative cotangent bundle of the fibration by Abelian varieties. Let $N^* \G^{\perp}_{/S}$ denote its image
and  consider its direct image $(\pi_U)_*N^* \G^{\perp}_{/S}$. If we take a section of $N^* \G^{\perp}$ restricted to a fiber $F$ of $pi$, thus a closed holomorphic $1$-form,  then the parallel transport along the leaves of $\mathcal G^{\perp}$ provides canonical  extensions to a neighborhood of $F$ on $X$ which is still closed. Thus  $(\pi_U)_*N^* \G^{\perp}_{/S}$ is flat for the Gauss-connection. It follows that $N^*\G^{\perp}$ is hermitian flat on $U$. Since the complement of $U$ has no irreducible components invariant by $\G^{\perp}$ this hermitian flat structure on $N^*\G^{\perp}$ extends to the whole $X$.  Hence not only $c_1(T\G)=0$ but also $c_2(T\G)=0$,
 contradicting the maximality of $\F$.
Thus either $\F$ is $p$-closed for almost every prime of $R$ or $T\F_\p$ is not Frobenius semi-stable for infinitely many primes $\p$.

Assume we are in the first case: the foliation is $p$-closed for almost every prime $\p$.   There exists  a complex manifold
$X_{\F}$ endowed with a holomorphic submersion $\pi : X_{\F} \to X$ and a section $\sigma : X \to X_{\F}$ such that the fiber of $\pi$ over $x \in X$ is the holonomy covering  of the leaf of $\F$ through $x$, see for instance \cite[Section 4.2]{Brunella}.

 Since $X$ is compact and the leaves of $\F$ are uniformized by Euclidean spaces according to Theorem \ref{TI:A}, it follows that $X_{\mathcal F}$ is Liouvillian in the sense of pluripotential theory: every  plurisbharmonic function bounded from above is constant. Moreover there exists a map $ \varphi: X_{\mathcal F} \to X\times X$ such that (a) $\varphi \circ \sigma(x)= (x,x)$; (b) the restriction of $\varphi$ to  $\pi^{-1}(x)$ is the holonomy covering
 of the leaf through $x$ of a copy of  $\mathcal F$ contained in $\{x\} \times X$. Therefore our foliation satisfies all the hypothesis of \cite[Theorem 2.2]{Bost} and we conclude that all the  leaves of $\F$  are  algebraic. Since $c_1(T\F) = c_2(T\F)=0$ then the leaves of $\F$ also have $c_1=c_2=0$ and, as recalled in the Introduction, it follows that they
 are all finite  coverings of Abelian varieties. The transverse foliation $\F^{\perp}$ defines isomorphisms between the distinct leaves and establishes the isotriviality of the family of Abelian varieties. Therefore after an unramified covering we arrive at the product of an Abelian variety $A$  with another  projective manifold $Y$ and the pull-back of $\F$ is given by the relative tangent sheaf of the projection $A \times Y \to Y$. This shows that assertion (1) in the statement of Theorem \ref{TI:D}  holds true.

 If instead we are in case (3) of Proposition \ref{P:saturation} then it is assertion (2)  in the statement of Theorem \ref{TI:D}  that holds true. The Theorem follows.

 The general case, where $R$ is a arbitrary finitely generated $\Z$-algebra, can be proved along the same lines.  If $\F$ is $p$-closed for every maximal prime $\p$ in a non-empty open subset of $Spec(R)$ and $K$, the field of fractions of $R$, have
positive transcendence degree over $\Q$ then we replace the $p$-closed foliation on the  projective manifold $X$ defined over $R$, by a family of $p$-closed foliations on projective manifolds over an affine base (with function field $K$) defined  over a number field. As the conditions on the Chern classes of $T\F$ are algebraic and stable under specialization, we can conclude that every foliation in the family (perhaps after restricting to a non-empty open subset of $B$) has Liouvillian leaves. As affine manifolds are also Liouvillian we can apply Bost's Theorem to this family in order to conclude. \qed

\subsection{A final remark}

We do believe that stable foliations on non-uniruled projective manifolds with $c_1(T\F)=0$ and $c_2(T\F)\neq 0$ have compact leaves.
Although the example presented in Section \ref{S:Faltings} tell us that case (3)  of Proposition \ref{P:saturation}
(the $p$-envelope $\G$ of a foliation with $c_1(T\F)=0$ has negative $c_1(T\G)$)  can happen,
we point out that   \cite[Theorem 5]{SB} implies that when $\dim(\F)=2$ and the $p$-envelope has negative first Chern class then $c_2(T\F)=0$.
A generalization of Shepherd-Barron' s
result to stable foliations with $c_1(T\F)=0$ and arbitrary dimension would leave open the possibility of using reduction modulo primes
to prove the compacteness of leaves.


\begin{thebibliography}{99}
\frenchspacing


\bibitem{AMN}
{\sc J. Amoros, M. Manjarin, M. Nicolau},
\emph{Deformations of K{\"a}hler manifolds with non vanishing holomorphic vector fields}.
arXiv:0909.4690v4 [math.AG] (2010) to appear in JEMS.



\bibitem{Beauville}
{\sc A. Beauville},
\emph{Complex manifolds with split tangent bundle.}
 Complex analysis and algebraic geometry, 61--70, de Gruyter, Berlin, 2000.

\bibitem{Bost}
{\sc J.-B. Bost},
\emph{Algebraic leaves of algebraic foliations over number fields.}
Publ. Math. Inst. Hautes \'Etudes Sci. No. \textbf{93} (2001), 161--221.

\bibitem{BDPP}
{\sc S. Boucksom, J.-P. Demailly, M. Paun, T. Peternell},
\emph{The pseudo-effective cone of a compact K\"{a}hler manifold and varieties of negative Kodaira dimension}.
arXiv:math/0405285v1 [math.AG] (2004).

\bibitem{Brenner}
{\sc H. Brenner},
\emph{On a problem of Miyaoka.} Number fields and function fields --- two parallel worlds, 51--59,
Progr. Math. \textbf{239}, Birkh\"auser Boston, Boston, MA, 2005.


\bibitem{BPT}
{\sc M. Brunella, J.V. Pereira,   F. Touzet},
\emph{K\"{a}hler manifolds with split tangent bundle.}
Bull. Soc. Math. France \textbf{134} (2006), no. 2, 241--252.

\bibitem{Brunella}
{\sc M. Brunella},
\emph{Uniformisation of foliations by curves.}  Holomorphic dynamical systems, 105--163,
Lecture Notes in Math., \textbf{1998}, Springer, Berlin, 2010.

\bibitem{c}
{\sc F.  Campana}
\emph{Remarques sur le rev\^etement universel des vari\'et\'es k\"ahl\'eriennes compactes.}
Bull. Soc. Math. France \textbf{122} (1994), no. 2, 255--284.


\bibitem{CPT}
{\sc F.  Campana, T.  Peternell},
\emph{Geometric stability of the cotangent bundle and the universal cover of a projective manifold (with an appendix by Matei Toma).} Bull. Soc. math. France \textbf{139} (2011) p. 41--74.




\bibitem{Demailly}
{\sc J.-P. Demailly},
\emph{On the Frobenius integrability of certain holomorphic $p$-forms.}  Complex geometry (G\"{o}ttingen, 2000), 93--98, Springer, Berlin, 2002.

\bibitem{Donaldson}
{\sc S.K. Donaldson}
\emph{Infinite determinants, stable bundles and curvature. } Duke Math. J.  54  (1987),  no. 1, 231--247


\bibitem{ets}
{\sc T. Ekedahl, N.I. Shepherd-Barron, R.L Taylor}, \emph{A conjecture on the existence of compact leaves of algebraic foliations}, Shepherd-Baron's homepage.

\bibitem{Faltings}
{\sc G.  Faltings},
\emph{Arakelov's theorem for abelian varieties.}
Invent. Math. \textbf{73} (1983), no. 3, 337--347.

\bibitem{Fujiki}
{\sc A. Fujiki}
\emph{Deformation of uniruled manifolds.}
Publ. Res. Inst. Math. Sci. 17 (1981), no. 2, 687-702.


\bibitem{GM}
{\sc X.  G\'{o}mez-Mont},
\emph{Integrals for holomorphic foliations with singularities having all leaves compact.}
Ann. Inst. Fourier (Grenoble) \textbf{39} (1989), no. 2, 451--458.

\bibitem{Grothendieck}
{\sc A.  Grothendieck},
\emph{Techniques de construction et th\'eor\'emes d'existence en g\'eom\'etrie
alg\'ebrique. IV. Les sch\'emas de Hilbert. }S\'eminaire Bourbaki, Vol. 6, Exp. No. 221, 249--276, Soc. Math. France, Paris, 1995

\bibitem{Horing}
{\sc A.  H\"oring},
\emph{The structure of uniruled manifolds with split tangent bundle.}
Osaka J. Math. \textbf{45} (2008), no. 4, 1067--1084.


\bibitem{Jacobson}
{\sc N.  Jacobson},
\emph{Lie algebras.}
Republication of the 1962 original. Dover Publications, Inc., New York, 1979.


\bibitem{kob2}
{\sc S. Kobayashi},
\emph{Differential geometry of complex vector bundles.}
Publications of the Mathematical Society of Japan, 15. Kan\^ o Memorial Lectures, 5. Princeton University Press and Iwanami Shoten, Tokyo, 1987.

\bibitem{kob}
{\sc S. Kobayashi},
\emph{Hyperbolic complex spaces.} Grundlehren der Mathematischen Wissenschaften, \textbf{318} Springer-Verlag, Berlin, 1998. xiv+471 pp.



\bibitem{K}
{\sc J.  Koll\'ar},
\emph{Shafarevich maps and plurigenera of algebraic varieties.}
Invent. Math. \textbf{113} (1993), no. 1, 177-215.

\bibitem{K2}
{\sc J.  Koll\'ar},
\emph{Subadditivity of the Kodaira dimension: fibers of general type}. Algebraic geometry, Sendai, 1985, 361--398,
Adv. Stud. Pure Math. \textbf{10}, North-Holland, Amsterdam, 1987.

\bibitem{Langer}
{\sc A. Langer},
\emph{Semistable sheaves in positive characteristic.}
Ann. of Math. (2) 159 (2004), no. 1, 251--276.


\bibitem{li}
{\sc D.  Lieberman}
\emph{Compactness of the chow scheme: applications to automorphisms and deformations of K\"ahler manifolds.}
Fonctions de plusieurs variables complexes, III (S\'{e}m. F. Norguet, 1975-77), pp. 140-186, Lecture Notes in Math, \textbf{670}.



\bibitem{croco3}{\sc F. Loray, J. V. Pereira, F Touzet},
\emph{ Singular foliations with trivial canonical class.}  arXiv:1107.1538v3.



\bibitem{SB}
{\sc N. I. Shepherd-Barron},
\emph{Semi-stability and reduction mod $p$.}
Topology \textbf{37} (1998), no. 3, 659-664.

\bibitem{Touzet}
{\sc F. Touzet},
\emph{Feuilletages holomorphes de codimension un dont la classe canonique est triviale.}
Ann. Sci. \'Ec. Norm. Sup\'er. (4) \textbf{41} (2008), no. 4, 655--668.

\bibitem{TouTou}
{\sc F. Touzet},
\emph{Structure des feuilletages k\"ahleriens en courbure semi-n\'egative.}
 Ann. Fac. Sci. Toulouse Math. (6) \textbf{19} (2010), no. 3-4, 86--886.


\bibitem{Z}{\sc K.  Zuo}
\emph{Kodaira dimension and Chern hyperbolicity of the Shafarevich maps for representations of $\pi_1$ of compact K\"ahler manifolds.}
J. Reine Angew. Math. \textbf{472} (1996), 139--156.




\end{thebibliography}
\end{document}